\newcommand{\field}[1]{\mathbb{#1}}
\newcommand{\mc}[1]{\ensuremath{\mathcal{#1}}}
\newcommand{\ul}[1]{\underline{#1}}
\newcommand{\wt}[1]{\widetilde{#1}}
\theoremstyle{definition}
\newtheorem{definition}{\ul{Definition}}[subsection]
\newtheorem{theorem}[definition]{\ul{Theorem}}
\newtheorem*{theorem*}{\ul{Theorem}}
\newtheorem{proposition}[definition]{\ul{Proposition}}
\newtheorem{corollary}[definition]{\ul{Corollary}}
\newtheorem{lemma}[definition]{\ul{Lemma}}
\theoremstyle{definition}
\newtheorem{construction}[definition]{\ul{Construction}}
\newtheorem{notation}[definition]{\ul{Notation}}
\newtheorem*{problem*}{\ul{Problem}}
\newtheorem{example}[definition]{\ul{Example}}
\theoremstyle{remark}
\newtheorem{remark}[definition]{\ul{Remark}}
\def\DD{\mathbf{\Delta}}
\def\C{\field{C}}
\def\CP{\field{CP}}
\def\sse{\subseteq}
\def\cd{\cdot}
\def\R{\field{R}}
\def\O{\ensuremath{\mathcal{O}}}
\def\Lc{\mc{L}}
\def\vp{\varphi}
\def\SS{\mathbb{S}}
\def\DD{\mathbb{D}}
\newcolumntype{L}{>{$}l<{$}}
\def\temp{&}
\def\CT{\C^\times}
\def\UGQT{\texttt{UGQT}\xspace}
\Crefname{construction}{Construction}{Constructions}
\def\arXiv#1{\href{https://arxiv.org/abs/#1}{arXiv:#1}}
\def\web#1{\href{#1}{web}}
\newcommand\thankssymb[1]{\textsuperscript{\@fnsymbol{#1}}}
\begin{document}
\title[Chern classes]{Chern Classes of Toric Variety Bundles}
\author[G. Taroyan]{Gregory Taroyan\thankssymb{2}}
\thanks{\thankssymb{2}The author's work is supported by Vanier Canada Graduate Scholarship, funding number CGV --- 192668.}
\begin{abstract}
	In this paper, we resolve a conjecture of Khovanskii--Monin on the Chern classes of toric variety bundles. The main result is a formula for the total Chern class of the tangent bundle of a toric variety bundle in terms of the total Chern class of the base and the total Chern class of the toric fibre. The result serves as a simultaneous generalization of the description of the total Chern class of a projectivized vector bundle and of the formula for the total Chern class of a toric variety in terms of the Chern classes of the toric divisors. We also establish a topological version of this statement for stably complex quasitoric manifolds. As an immediate application, we obtain a formula for the total Chern class of a toroidal horospherical variety in terms of the Chern classes of the generalized flag variety and the total Chern class of the toric fibre, as well as a new proof of Masuda's formula for equivariant Chern classes. This paper is written with a view towards finding polytopal models for various numeric invariants of spherical varieties.
\end{abstract}
\maketitle
\setcounter{tocdepth}{1}
\tableofcontents
\setcounter{tocdepth}{3}
\section{Introduction}
In this text, we resolve in the affirmative a conjecture due to Khovanskii--Monin \cite[Conjecture 3.6]{Khovanskii_Monin} on the Chern classes of toric variety bundles. Toric variety bundles should be regarded as a relative version of toric varieties obtained by applying the associated bundle construction to a principal torus bundle over a given base space. 

The main result of this note is the following formula for the total Chern class of the tangent bundle of a toric variety bundle. 
\begin{theorem*}[{\Cref{thm:chern_class_formula}}]
	Let \(\pi:E_\Sigma\to B\) be a toric variety bundle. Then, the total Chern class of the tangent bundle of \(E\) is given by the following formula.
	\[
		c(TE)=\pi^*c(TB)\cd \prod_{\rho\in \Sigma(1)}(1+[\wt{D_\rho}]).
	\]
	Here \([\wt{D_\rho}]\) is the cohomology class of the toric divisor associated to the ray \(\rho\) in the fan \(\Sigma\) corresponding to the toric variety bundle \(E_\Sigma\to X_\Xi\). Here the class \(\pi^*c(TB)\) denotes the pullback of the total Chern class of the tangent bundle of \(B\) along the map \(\pi\).
\end{theorem*}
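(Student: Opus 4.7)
The plan is to split the computation of $c(TE)$ into a vertical and horizontal piece and then reduce the vertical part, via the Borel construction, to the classical formula for the total Chern class of a toric variety. First, the short exact sequence of tangent bundles for the smooth submersion $\pi$,
\[
0 \to T_\pi E \to TE \to \pi^* TB \to 0,
\]
combined with the multiplicativity of the total Chern class, gives $c(TE) = c(T_\pi E) \cdot \pi^* c(TB)$. It therefore suffices to prove $c(T_\pi E) = \prod_{\rho \in \Sigma(1)}(1 + [\wt{D_\rho}])$ for the vertical (relative) tangent bundle.

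To obtain this, I would Borel-construct the standard $T$-equivariant Cox--Jurkiewicz Euler sequence on the toric fibre $X_\Sigma$ (of dimension $n$),
\[
0 \to \Hom_\Z(\Cl(X_\Sigma), \Z) \otimes_\Z \O_{X_\Sigma} \to \bigoplus_{\rho \in \Sigma(1)} \O_{X_\Sigma}(D_\rho) \to TX_\Sigma \to 0,
\]
along the principal torus bundle $P \to B$ underlying $E = P \times_T X_\Sigma$. Since $P \times_T (-)$ is exact, sends $TX_\Sigma$ to $T_\pi E$, and sends each $T$-equivariant line bundle $\O_{X_\Sigma}(D_\rho)$ to $\O_E(\wt{D_\rho})$, it yields a short exact sequence on $E$ of the form
\[
0 \to \O_E^{|\Sigma(1)| - n} \to \bigoplus_{\rho \in \Sigma(1)} \O_E(\wt{D_\rho}) \to T_\pi E \to 0.
\]
The decisive point is that the leftmost bundle is genuinely trivial: in the Cox GIT presentation $X_\Sigma = (\C^{\Sigma(1)} \setminus Z)/G$ with $G = \Hom_\Z(\Cl(X_\Sigma), \C^*)$, this bundle is identified with the infinitesimal generators of the $G$-action, and since $G$ and $T$ commute inside $(\C^*)^{\Sigma(1)}$ the associated $T$-weights all vanish. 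Consequently its Borel construction has trivial total Chern class, and multiplicativity in the displayed sequence gives $c(T_\pi E) = \prod_\rho (1 + [\wt{D_\rho}])$, closing the argument.

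The main technical obstacle is the $T$-equivariant bookkeeping in the Cox Euler sequence: one must specify compatible $T$-equivariant structures on the three sheaves, confirm that the asserted maps are $T$-equivariant, and verify that the leftmost term carries trivial $T$-weight. The cleanest verification proceeds pointwise at a $T$-fixed point $x_\sigma$ of $X_\Sigma$ corresponding to a maximal cone $\sigma$, where each term decomposes into one-dimensional $T$-weight spaces whose weights can be read off combinatorially from the fan: the weights on $TX_\Sigma|_{x_\sigma}$ are the dual basis to the primitive generators of the rays of $\sigma$, the weights on $\bigoplus_\rho \O(D_\rho)|_{x_\sigma}$ are determined by the same data, and matching these weights in the sequence forces the kernel to be weight-zero. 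Once this equivariant structure is pinned down, exactness of Borel construction combined with multiplicativity of the total Chern class delivers the formula mechanically.
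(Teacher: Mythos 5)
Your argument is correct, but it is not the route the paper's main proof takes; it is essentially the alternative that the paper only sketches in \Cref{subsec:euler_sequences}. The paper's actual proof of \Cref{thm:chern_class_formula} is a functoriality argument: the universal toric variety bundle $(E_N)_\Sigma \to (\CP^\infty)^n = BT_N$ is itself an (infinite-dimensional) fibered toric variety (\Cref{prop:toric_bundle_universal}), so Khovanskii--Monin's formula for fibered toric varieties gives $c(V(\pi)) = \prod_\rho (1+[\wt{D_\rho}])$ there (\Cref{cor:vertical_chern_class_fibered_case}); since the vertical bundle of any $E_\Sigma \to B$ is the pullback of the universal one along the classifying map (\Cref{prop:vertical_isomorphism_fiber_square}), the general case follows by naturality of Chern classes, and the vertical exact sequence plus Whitney then closes the argument exactly as in your first paragraph. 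Your route instead applies the associated-bundle construction $P\times_T(-)$ to the dualized $T$-equivariant Cox Euler sequence; the key points you identify are the right ones, namely that the kernel $\Hom_\Z(\Cl(X_\Sigma),\Z)\otimes\O_{X_\Sigma}$ is \emph{equivariantly} trivial (its trivializing sections are the generating vector fields of the Cox group $G$, which are invariant because $(\CT)^{\Sigma(1)}$ is abelian), and that one must take the canonical equivariant structure on each $\O_{X_\Sigma}(D_\rho)$ (the one whose defining section is invariant) so that $P\times_T \O_{X_\Sigma}(D_\rho)\cong\O_E(\wt{D_\rho})$; with those choices made, and using that $\Cl(X_\Sigma)$ is free for a smooth complete fibre, the descent and Whitney steps are mechanical. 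The trade-off: the paper's proof is shorter and needs no equivariant sheaf bookkeeping, but it leans on the classifying-space formalism (infinite-dimensional toric varieties) and on the Khovanskii--Monin computation for fibered toric varieties as a black box; your Euler-sequence proof is more self-contained and, as the paper itself remarks, is the version more likely to extend to singular toric fibres via MacPherson-type Chern classes.
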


The proof relies on the results of Khovanskii--Monin in the case of the toric base. By functoriality, the general case reduces to the case of universal bundles. As it turns out, universal toric variety bundles are fibered toric varieties. For fibered toric varieties, the above theorem is due to Khovanskii and Monin \cite[Proposition 3.7]{Khovanskii_Monin}. Consequently, the general case follows by a simple pullback argument.

Additionally, we show that this theorem generalizes to the case of unitary generalized quasitoric manifold bundles, which should be regarded as a topological version of toric variety bundles. In this case, the total Chern class of the tangent bundle of a unitary generalized quasitoric manifold bundle is given by a similar formula, where the classes of the characteristic submanifolds replace the toric divisor classes. 

We derive several interesting consequences of these results. The use of unitary generalized quasitoric manifold bundles allows us to give a new proof of Masuda's formula for the equivariant Chern classes of unitary generalized quasitoric manifolds \cite[Theorem 3.1]{Masuda}. In addition, we use our result to compute the total Chern class of a toroidal horospherical variety in terms of the total Chern class of the generalized flag variety and the total Chern class of the toric fibre. This calculation served as a starting point for the present work, as it provides a first step in a program for calculating invariants of subvarieties in spherical varieties, following the general philosophy of BKK theorems, see \cite{Kaveh_Khovanskii, Khovanskii_Monin_Limonchenko, Khovanskii_Monin} and references therein. We intend to explore this approach and its applications to calculating numeric invariants of spherical varieties following the general strategy of Danilov--Khovanskii \cite{Danilov_Khovanskii}.
\subsection*{Paper structure}
The paper is organized as follows. In \Cref{sec:toric_bundles}, we recall the definition of toric variety bundles and establish some basic properties. In \Cref{sec:chern_classes}, we prove the main result of this note. In \Cref{sec:quasitoric_bundles} we prove a version of the main result for unitary generalized quasitoric manifold bundles. Finally, in \Cref{sec:Concluding_Remarks}, we discuss applications and possible extensions of our results. In particular, we use our result to give an alternative proof of Masuda's formula for equivariant Chern classes of unitary generalized quasitoric manifolds (\Cref{subsec:equivariant_chern_classes}). In \Cref{subsec:toroidal_horospherical_varieties} we provide a calculation of Chern classes of toroidal horospherical varieties. Finally, in \Cref{subsec:euler_sequences}, we discuss a possible extension of our results to the case of singular toric variety bundles. To this end, we sketch an alternative proof of the main result using relative Euler sequences.
\subsection*{Acknowledgements}
We would like to thank Askold Khovanskii for posing the problem and providing helpful comments on the matter. We would also like to thank Leonid Monin for discussing the subject of this paper with us and reading its draft. We are also grateful to Taras Panov, for helpful comments. Finally, we are grateful to our partner, Alisa Chistopolskaya, for her constant support of our research.

\section{Toric Bundles}\label{sec:toric_bundles}
\subsection{Generalities on principal bundles}
\begin{definition}
	Let \(G\) be a topological group. A \textit{principal \(G\)-bundle} is a locally trivial bundle \(\pi: E \to B\) with a continuous right action of \(G\) on \(E\) such that the action is free and transitive on each fibre of \(\pi\).
\end{definition}
\begin{definition}
	Given a principal \(G\)-bundle \(\pi: E \to B\), and a left \(G\)-space \(X\), one can form a new bundle \(E \times_G X \to B\) with fibre \(X\) by taking the quotient of the product bundle \(E \times X\) by the diagonal action of \(G\).
\end{definition}
\begin{definition}
	Let \(G\) be a topological group. The \emph{classifying space} of \(G\), denoted by \(BG\), is defined as follows. \(BG\) is a topological space together with a principal \(G\)-bundle \(EG\to BG\) such that for any principal \(G\)-bundle \(E\) over any ``reasonable'' topological space \(B\) there exists a unique (up to homotopy) map \(f:B\to BG\) such that \(f^*EG\cong E\).
\end{definition}
The following example is well-known.
\begin{example}\label{ex:classifying_space_of_a_torus}
	Let \(G=(\CT)^n\). Then the classifying space \(BG\) is given by the infinite-dimensional projective space \((\CP^\infty)^n\). The universal bundle over \((\CP^\infty)^n\) is given by the infinite-dimensional vector bundle \((\C^\infty\setminus 0)^n\to (\CP^\infty)^n\). The fibres of this bundle are given by the punctured complex spaces \((\C^\infty\setminus 0)^n\).
\end{example}
\subsection{Toric variety bundles}
In this section, we define toric variety bundles and describe the ``universal toric variety bundle'' construction. For more details on toric variety bundles, we refer the reader to \cite[\S 2]{Khovanskii_Monin} and \cite[\S 3]{3_authors}.
\begin{construction}
	Let \(\Sigma\) be a fan in a lattice \(N\). Denote by \(X_\Sigma\) the corresponding toric variety, and denote by \(T_N\) the big algebraic torus acting on \(X_\Sigma\). Let \(\pi:E\to B\) be a principal \(T_N\)-bundle over a complex manifold \(B\). Then, the \emph{toric variety bundle} associated to \(E\) is the bundle \(E \times_{T_N} X_\Sigma  \to B\). We denote this bundle by \(\pi_\Sigma:E_\Sigma \to B\).
\end{construction}
\begin{proposition}\label{prop:toric_bundle_universal}
	Let \(N\) be a lattice and denote by \(T_N\) the associated algebraic torus \(N\otimes \CT\). The universal \(T_N\)-bundle is given by the following infinite-dimensional toric variety
	\[
		E_N=(\C^\infty\setminus 0)^n \to (\CP^\infty)^n=BT_N.
	\]	
	Consider a fan \(\Sigma\) in \(N\). Then the associated toric variety bundle \((E_N)_\Sigma\to BT_N\) is an infinite-dimensional fibered toric variety. 
\end{proposition}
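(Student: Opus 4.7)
The plan is to establish both halves of the proposition by reducing to the finite-dimensional setting and passing to a colimit.

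For the first assertion, I would begin by choosing a basis of the lattice \(N\), which identifies \(T_N = N\otimes \CT\) with the standard torus \((\CT)^n\) where \(n=\rk N\). Under this identification, \Cref{ex:classifying_space_of_a_torus} directly yields that \(BT_N=(\CP^\infty)^n\) and that the universal bundle is the product of the tautological \(\CT\)-bundles, namely \(E_N=(\C^\infty\setminus 0)^n \to (\CP^\infty)^n\). Independence from the chosen basis follows from the functoriality of the classifying space construction applied to automorphisms of \(T_N\).

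For the second assertion, the strategy is to filter everything by finite-dimensional pieces and observe that, at each finite stage, \Cref{thm:KM_twisted} already supplies a toric structure. Write \(BT_N=\colim_k (\CP^k)^n\) and \(E_N=\colim_k (\C^{k+1}\setminus 0)^n\), where each \((\CP^k)^n\) is a genuine toric variety associated to an explicit product fan \(\Xi_k\) in \(\bigoplus_{i=1}^n \Z^k\). The restriction of the universal bundle along the inclusion \((\CP^k)^n\hookrightarrow BT_N\) is a finite-dimensional principal \(T_N\)-bundle over the toric variety \(X_{\Xi_k}=(\CP^k)^n\), so the associated toric variety bundle is a (finite-dimensional) fibered toric variety in the sense of \Cref{sec:toric_bundles}. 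Applying \Cref{thm:KM_twisted}, each such total space is the toric variety of a twisted fan \(\Xi_k\ltimes_{\Phi_k}\Sigma\) for some piecewise linear map \(\Phi_k:|\Xi_k|\to N\).

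It then remains to check that the fans \(\Xi_k\ltimes_{\Phi_k}\Sigma\) assemble into a compatible direct system under the inclusions \((\CP^k)^n\hookrightarrow (\CP^{k+1})^n\), i.e.\ that the piecewise linear maps \(\Phi_k\) are restrictions of a single piecewise linear map \(\Phi:|\Xi|\to N\) on the colimit fan \(\Xi=\colim_k \Xi_k\). This compatibility follows from naturality of the associated bundle construction: both the projection onto the base and the torus action commute with the filtration. Passing to the colimit then exhibits \((E_N)_\Sigma=\colim_k (E_N|_{(\CP^k)^n})_\Sigma\) as the colimit of finite-dimensional toric varieties with matching torus actions, which is what is meant by an infinite-dimensional fibered toric variety fibered over \(X_\Xi=BT_N\) with fibre \(X_\Sigma\).

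The main obstacle is not mathematical but foundational, namely pinning down precisely what an infinite-dimensional toric variety is in this context; once this is taken to mean a colimit of finite-dimensional toric varieties with compatible torus actions and fans, the verification amounts to a routine compatibility check of the twisted fan construction under the natural filtration by \((\CP^k)^n\).
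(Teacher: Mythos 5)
Your argument is correct, but it takes a genuinely different route from the paper. The paper's own proof is a one-line observation: by \Cref{ex:classifying_space_of_a_torus} the universal bundle \((\C^\infty\setminus 0)^n\to(\CP^\infty)^n\) is itself an algebraic and (by construction) toric morphism of (infinite-dimensional) toric varieties, so the associated bundle \((E_N)_\Sigma\to BT_N\) is by definition a toric variety bundle over a toric base, i.e.\ a fibered toric variety; \Cref{thm:KM_twisted} is never invoked at this point. You instead filter by the finite-dimensional approximations \((\C^{k+1}\setminus 0)^n\to(\CP^k)^n\), apply \Cref{thm:KM_twisted} at each stage to get twisted fans \(\Xi_k\ltimes_{\Phi_k}\Sigma\), and pass to the colimit. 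Your approach buys more: it pins down what ``infinite-dimensional fibered toric variety'' means (a compatible colimit of honest fibered toric varieties), which is exactly the foundational point the paper glosses over here and only gestures at later (the remark about finite-dimensional approximations of the Borel construction), and it is the form of the statement one actually needs when applying \Cref{cor:vertical_chern_class_fibered_case} to the universal bundle. The paper's proof buys brevity, at the cost of treating the ind-objects as toric varieties without comment. One small point to tighten in your write-up: since the piecewise linear maps \(\Phi_k\) are only determined up to toric isomorphism, the compatibility of the system \(\Xi_k\ltimes_{\Phi_k}\Sigma\) should not be attributed merely to ``naturality''; rather, one should note that for the standard bundles \((\C^{k+1}\setminus 0)^n\to(\CP^k)^n\) there is an explicit compatible choice of \(\Phi_k\) (restriction of a single piecewise linear map on the colimit of the fans of \((\CP^k)^n\)), after which the inclusions of twisted fans are immediate. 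This is a routine verification, not a gap, but as stated it is slightly loose.
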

\begin{proof}
	The assertion follows from \Cref{ex:classifying_space_of_a_torus} by observing that the map \((\C^\infty\setminus 0)^n \to (\CP^\infty)^n\) is algebraic and (by construction) toric. 
\end{proof}
\begin{proposition}
	Let \(E\to B\) be a principal \(T_N\)-bundle. Consider a fan \(\Sigma\) in \(N\). Then, the toric variety bundle \(E_\Sigma\to B\) is a pullback of the universal toric variety bundle \((E_N)_\Sigma\to BT_N\) along a map \(B\to BT_N\).
\end{proposition}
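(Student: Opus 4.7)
The plan is to combine the defining universal property of $BT_N$ with the naturality of the associated bundle construction under pullback. First, since $E\to B$ is a principal $T_N$-bundle over a reasonable base, the universal property recalled in the definition of $BG$ produces a classifying map $f:B\to BT_N$, unique up to homotopy, such that $f^*E_N\cong E$ as principal $T_N$-bundles; here $E_N\to BT_N$ is the explicit universal $T_N$-bundle from \Cref{ex:classifying_space_of_a_torus}.

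Next I would record the following general fact: for a principal $T_N$-bundle $P\to X$, a continuous map $g:X'\to X$, and a left $T_N$-space $Y$, there is a canonical isomorphism
\[
	g^*(P\times_{T_N}Y)\;\cong\;(g^*P)\times_{T_N}Y
\]
of bundles over $X'$. Both sides are canonically quotients of the fibre product $\{(x',p,y)\in X'\times P\times Y\mid g(x')=\pi(p)\}$ by the diagonal $T_N$-action on the last two coordinates; the two associated-bundle constructions just take the quotient in a different order. Concretely, the assignment $[(x',p),y]\mapsto (x',[p,y])$ is a well-defined bundle isomorphism, and since the $T_N$-action is free, continuity of its inverse reduces to a standard quotient-topology check.

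Applying this naturality with $g=f$, $P=E_N$ and $Y=X_\Sigma$ then yields
\[
	E_\Sigma \;=\; E\times_{T_N}X_\Sigma \;\cong\; (f^*E_N)\times_{T_N}X_\Sigma \;\cong\; f^*(E_N\times_{T_N}X_\Sigma) \;=\; f^*(E_N)_\Sigma,
\]
which exhibits $E_\Sigma$ as the pullback of the universal toric variety bundle along $f$. The only nontrivial ingredient is the naturality assertion above; once it is in hand, the conclusion is automatic from the universal property of $BT_N$, so I expect no real obstacle here beyond the bookkeeping of quotient topologies.
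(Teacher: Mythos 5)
Your proposal is correct and follows exactly the paper's own argument: classify $E$ by a map $B\to BT_N$ using the universal property of the classifying space, then invoke the compatibility of the associated bundle construction with pullback (which you spell out in detail where the paper simply cites functoriality). No issues here.
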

\begin{proof}
	This result is immediate from the similar result for \(T_N\)-bundles and the functoriality of the associated bundle construction. 
\end{proof}
\subsection{Fibered toric varieties}
Here, we briefly recall the concepts related to the special class of toric variety bundles known as fibered toric varieties. For more details, we refer the reader to \cite[\S 2]{Khovanskii_Monin}. We also refer the reader to \cite{CoxLittleSchenk} for a textbook introduction to toric varieties.
\begin{definition}[{\cite[Definition 2.1]{Khovanskii_Monin}}]
	Let \(\Sigma, \Xi\) be a pair of fans in lattices \(N, M\) respectively. We denote by \(\Phi:|\Xi|\to M\) a piecewise linear map, which is linear on each cone of \(\Xi\). A twisted fan \(\Xi\ltimes_\Phi \Sigma\) in the lattice \(M\oplus N\) is defined as follows.
	\[
		\Xi\ltimes_\Phi \Sigma=\{\wt{\sigma}+\tau \mid \sigma\in \Xi, \tau\in \Sigma\},\quad \wt{\sigma} \text{ is the graph of } \Phi\vert_\sigma.
	\]
\end{definition}
\begin{definition}
	Let \(E_\Sigma\to X_\Xi\) be a toric variety bundle over a toric variety \(X_\Xi\). We say that \(E_\Sigma\to X_\Xi\) is a \emph{fibered toric variety}.
\end{definition}
The following result of Khovanskii--Monin justifies this name.
\begin{theorem}[{\cite[Theorem 2.8]{Khovanskii_Monin}}]\label{thm:KM_twisted}
	Let \(E_\Sigma\to X_\Xi\) be a fibered toric variety. Then, the total space \(E_\Sigma\) is a toric variety defined by the twisted fan \(\Xi\ltimes_\Phi \Sigma\) for some piecewise linear map \(\Phi:|\Xi|\to M_\Sigma.\)
\end{theorem}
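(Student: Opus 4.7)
The plan is to exhibit the total space \(E_\Sigma\) as a toric variety for the enlarged torus \(T_{M\oplus N}=T_M\times T_N\), where \(T_M\), \(T_N\) denote the tori acting on \(X_\Xi\) and \(X_\Sigma\) respectively, and then to match its fan against the combinatorial prescription \(\Xi\ltimes_\Phi\Sigma\). Write \(\mc{E}\to X_\Xi\) for the principal \(T_N\)-bundle associated to \(E_\Sigma\), so that \(E_\Sigma=\mc{E}\times_{T_N}X_\Sigma\). The fibrewise \(T_N\)-action on \(E_\Sigma\) comes from the right action on \(\mc{E}\), and a commuting \(T_M\)-action is obtained by lifting the toric action on \(X_\Xi\) to \(\mc{E}\): such a lift exists (possibly after passing to a finite cover of \(T_M\)) because the principal bundle is classified by an element of \(H^2(X_\Xi;N)\) which is automatically equivariant on the toric base. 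The two commuting actions combine into a \(T_{M\oplus N}\)-action on \(E_\Sigma\).

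Next I would check that this action gives a genuine toric structure. Restricting to the open torus orbit \(T_M\subset X_\Xi\), the principal bundle \(\mc{E}\) is trivial (since \(T_M\) is affine), so \(E_\Sigma|_{T_M}\cong T_M\times X_\Sigma\) contains \(T_M\times T_N\) as an open dense orbit; normality and separatedness of \(E_\Sigma\) descend from the fibration structure. For the fan identification, by the orbit-cone correspondence the rays correspond to \(T_{M\oplus N}\)-invariant prime divisors, which come in two flavours. For each \(\rho\in\Xi(1)\) the preimage \(\pi^{-1}(D_\rho)\) is invariant, while for each \(\tau\in\Sigma(1)\) the fibrewise toric divisor \(\wt{D_\tau}\) is invariant. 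Writing \(u_\rho\), \(u_\tau\) for the primitive ray generators, the first family contributes rays of the form \((u_\rho,\Phi(u_\rho))\in M\oplus N\), where \(\Phi:|\Xi|\to N\) is a piecewise linear map encoding the twisting of \(\mc{E}\), and the second family contributes rays \((0,u_\tau)\). The maximal cones are then spanned by the graph \(\wt{\sigma}\) of \(\Phi\vert_\sigma\) for \(\sigma\in\Xi\) together with a maximal cone of \(\Sigma\), recovering exactly the twisted fan \(\Xi\ltimes_\Phi\Sigma\).

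The main obstacle is constructing the map \(\Phi\) and verifying that it is well-defined and linear on each cone. I would define \(\Phi\) on each ray \(\rho\in\Xi(1)\) via the \(T_M\)-equivariant trivialization of \(\mc{E}\) near the generic point of \(D_\rho\), or equivalently through the pairing of the first Chern class of \(\mc{E}\) against \([D_\rho]\). The key step is to show that the assignments \(\rho\mapsto\Phi(u_\rho)\) assemble into a function that is linear on each maximal cone \(\sigma\in\Xi\); this amounts to the cocycle condition for gluing local trivializations of \(\mc{E}\) across the affine toric charts \(U_\sigma\subset X_\Xi\), and is where the toric hypothesis on the base is essential. Once \(\Phi\) is in place, matching the fan of \(E_\Sigma\) with \(\Xi\ltimes_\Phi\Sigma\) reduces to a local check on affine charts: over \(U_\sigma\), the restriction \(E_\Sigma|_{U_\sigma}\) is an affine toric variety whose fan is spanned by \(\wt{\sigma}\) and the cones of \(\Sigma\), and these glue together in the prescribed twisted pattern.
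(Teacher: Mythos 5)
The paper does not actually prove this statement; it is imported verbatim from Khovanskii--Monin \cite[Theorem 2.8]{Khovanskii_Monin}, and the only in-paper surrogate is the proof sketch of the quasitoric analogue (\Cref{thm:KM_twisted_for_generalized_quasitoric_manifolds}). Your route is the standard one and is essentially the same as that sketch and as the cited proof: make the principal \(T_N\)-bundle equivariant for the torus of the base, combine the two commuting actions into a \(T_{M\oplus N}\)-action with dense orbit, and read off the fan from the invariant divisors, with \(\Phi\) recording the twisting. So the architecture is right.

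Two justifications are too thin, and the first is the actual content of the theorem. (1) The lift of the \(T_M\)-action to \(\mc{E}\) is asserted because the bundle ``is classified by an element of \(H^2(X_\Xi;N)\) which is automatically equivariant''; this is not an argument. The correct mechanism --- and the one the paper's quasitoric sketch spells out --- is to split \(\mc{E}\) into line bundles \(L_1\oplus\cdots\oplus L_n\) (Hilbert 90 for tori, or \(H^1(X_\Xi,\mathcal{O}^*_{X_\Xi})^n\)) and then linearize each \(L_i\), using that every line bundle on a smooth complete toric variety is \(\mathcal{O}(D)\) for a \(T_M\)-invariant divisor \(D\), i.e.\ that \(\operatorname{Pic}_{T_M}(X_\Xi)\to\operatorname{Pic}(X_\Xi)\) is surjective. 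No finite cover of \(T_M\) is needed. Relatedly, \(\mc{E}\vert_{T_M}\) is trivial not ``since \(T_M\) is affine'' (affineness does not kill \(\operatorname{Pic}\)) but because \(\operatorname{Pic}\) of an algebraic torus vanishes. (2) Your description of \(\Phi(u_\rho)\) as ``the pairing of the first Chern class of \(\mc{E}\) against \([D_\rho]\)'' is not correct as stated (that cap product does not land in \(N\)); the right definition is that the \(i\)-th coordinate of \(\Phi\) is the support function of the invariant divisor representing \(L_i\), equivalently the Cartier data \(\{m_\sigma\}\) of the chosen linearizations, whose compatibility on overlaps is exactly the cocycle condition you invoke and is what makes \(\Phi\) piecewise linear and linear on each cone. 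With these two points repaired, the fan identification you give matches \(\Xi\ltimes_\Phi\Sigma\) as defined in the paper.
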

\begin{remark}
	The choice of the piecewise linear map \(\Phi\) is not unique. However, different choices of \(\Phi\) give rise to torically isomorphic toric varieties.
\end{remark}
\section{Chern Classes of Toric Variety Bundles}\label{sec:chern_classes}
\subsection{Vertical subbundles}
Below, we recall basic facts about vertical bundles for fibre bundles. The material is standard and can be found in \cite[\S 3]{Husemoller}.
\begin{construction}
	Let \(\pi: E\to B\) be a fibre bundle of complex manifolds. Then, the \emph{vertical tangent bundle} of \(E\) is the subbundle of the tangent bundle of \(E\) consisting of tangent vectors tangent to the fibres of \(E\). We denote this bundle by \(V(\pi)\). More precisely, the fibre of \(V(\pi)\) at a point \(e\in E\) is given by \(\ker d_e\pi\).
\end{construction}
\begin{remark}
	The fibre bundle condition on \(\pi\) ensures that \(V(\pi)\) is a subbundle, and not just a subsheaf, of the tangent bundle of \(E\).
\end{remark}
\begin{proposition}\label{prop:vertical_isomorphism_fiber_square}
	Consider a fibre square of the following form. Where \(B, E, B'\) are manifolds and \(\pi\) is a fibre bundle.
\[\begin{tikzcd}
	{\tau^*E=E'} && E \\
	\\
	{B'} && B
	\arrow["{\wt{\tau}}", from=1-1, to=1-3]
	\arrow["{\tau^*\pi}"', from=1-1, to=3-1]
	\arrow["\lrcorner"{anchor=center, pos=0.125}, draw=none, from=1-1, to=3-3]
	\arrow["\pi", from=1-3, to=3-3]
	\arrow["\tau"', from=3-1, to=3-3]
\end{tikzcd}\]
	Then, we have the following natural isomorphism of vertical subbundles.
	\[
		\tau^*V(\pi)\cong V(\tau^*\pi).
	\]
\end{proposition}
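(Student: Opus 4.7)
The plan is to exploit the fact that pullback squares for fibre bundles are, fibrewise, diffeomorphisms, and then to differentiate along the fibre direction.

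First I would observe that by the universal property of the fibre product, for each \(b'\in B'\) the map \(\wt{\tau}\) restricts to a diffeomorphism
\[
	\wt{\tau}\vert_{E'_{b'}}:E'_{b'}=(\tau^*\pi)^{-1}(b')\xrightarrow{\;\cong\;}\pi^{-1}(\tau(b'))=E_{\tau(b')}.
\]
This is the only place where the hypothesis that \(\pi\) is a fibre bundle (rather than an arbitrary map) enters in an essential way, together with the preceding remark that guarantees \(V(\pi)\) is an honest subbundle.

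Next, I would differentiate. Applying the chain rule to the commuting square \(\pi\circ\wt{\tau}=\tau\circ\tau^*\pi\) shows that for any \(e'\in E'\) the differential \(d_{e'}\wt{\tau}\) sends \(\ker d_{e'}(\tau^*\pi)\) into \(\ker d_{\wt{\tau}(e')}\pi\); by the first step this restriction is a linear isomorphism, since it is the differential of a diffeomorphism between fibres. The assignment \(e'\mapsto d_{e'}\wt{\tau}\vert_{V(\tau^*\pi)_{e'}}\) is a bundle map over \(\wt{\tau}\) from \(V(\tau^*\pi)\) to \(V(\pi)\), so by the universal property of the pullback it factors through a bundle map
\[
	\Phi:V(\tau^*\pi)\longrightarrow \wt{\tau}^*V(\pi)
\]
of bundles over \(E'\), which is a fibrewise linear isomorphism.

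Finally, I would upgrade this pointwise isomorphism to an isomorphism of smooth (or holomorphic) vector bundles by working in local trivializations of \(\pi\). Over \(U\subset B\) with \(\pi^{-1}(U)\cong U\times F\), the pullback fibration is trivialized by \((\tau^*\pi)^{-1}(\tau^{-1}(U))\cong\tau^{-1}(U)\times F\), the vertical subbundles are identified with the \(TF\)-factors, and \(\Phi\) becomes the identity on \(TF\), hence smooth with smooth inverse.

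I do not anticipate any real obstacle beyond bookkeeping: the content of the proposition is entirely captured by the fibrewise diffeomorphism \(\wt{\tau}\vert_{E'_{b'}}\), and the subtlety is only to check that the resulting fibrewise isomorphisms glue to a bundle isomorphism, which is immediate once one chooses compatible local trivializations.
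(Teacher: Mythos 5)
Your proposal is correct and is essentially the paper's own argument written out in detail: the paper's proof simply notes that the pullback preserves fibres and that the vertical bundle consists of vectors tangent to the fibres, which is exactly the content of your fibrewise diffeomorphism \(\wt{\tau}\vert_{E'_{b'}}\), the chain-rule computation, and the local-trivialization check. Your reading of \(\tau^*V(\pi)\) as the pullback \(\wt{\tau}^*V(\pi)\) along the top map is the intended interpretation, so there is no gap.
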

\begin{proof}
	The result immediately follows from the fact that the vertical subbundle is defined as the subbundle of the tangent bundle spanned by the vectors tangent to the fibres of the fibre bundle, and the pullback preserves fibres.
\end{proof}
The following result is immediate and can be taken as a definition of the vertical bundle.
\begin{proposition}\label{prop:vertical_sequence}
	Given a fibre bundle \(\pi:E\to B\) of complex manifolds, there is the following exact sequence of vector bundles on \(E\).
	\begin{equation}\label{eq:vertical_sequence}
		0\to V(\pi)\to TE\to \pi^*TB\to 0.
	\end{equation}
\end{proposition}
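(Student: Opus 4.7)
The plan is to exhibit the three maps explicitly and then verify exactness fiberwise, using the local triviality of $\pi$ to ensure that we obtain an honest short exact sequence of vector bundles rather than merely of sheaves. The leftmost map is simply the inclusion $V(\pi) \hookrightarrow TE$ of the vertical subbundle, which is well-defined as a subbundle by the remark immediately preceding the proposition. The rightmost map will be the bundle map $d\pi: TE \to \pi^* TB$ obtained by packaging together the differentials $d_e \pi : T_e E \to T_{\pi(e)} B$; under the identification of the fiber of $\pi^* TB$ at $e \in E$ with $T_{\pi(e)} B$, this is a well-defined morphism of vector bundles over $E$.

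With the maps in hand, I would check exactness at each of the three terms. Injectivity of $V(\pi) \to TE$ is immediate from its definition as a subbundle. Exactness in the middle is the assertion that $\ker d_e\pi$ equals the fiber of $V(\pi)$ at $e$, which is literally the construction of $V(\pi)$. The only nontrivial point is surjectivity of $d\pi: TE \to \pi^* TB$. Here I would invoke local triviality: around any point $b \in B$ there is a neighborhood $U$ and a diffeomorphism $\pi^{-1}(U) \cong U \times F$ under which $\pi$ becomes the projection $U \times F \to U$. The differential of a projection is surjective at every point, so $d_e \pi$ is surjective for every $e \in E$. This gives fiberwise surjectivity of $d\pi$, and since both $TE$ and $\pi^*TB$ are vector bundles of the correct (locally constant) rank, the kernel has constant rank equal to $\dim F$, matching the rank of $V(\pi)$.

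The main obstacle, insofar as there is one, is the passage from a pointwise statement about tangent spaces to a statement about vector bundles: one needs the maps and kernels to have locally constant rank so that one truly obtains a subbundle and a quotient bundle. This is exactly where the fiber bundle hypothesis (as opposed to, say, a flat family of complex manifolds that might degenerate) enters, and it is the same input that the preceding remark uses to assert that $V(\pi)$ is a subbundle. Once that is granted, the short exact sequence is a routine consequence of the fact that a surjection of vector bundles of locally constant rank splits locally into a direct sum with its kernel.
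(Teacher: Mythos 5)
Your proof is correct and is exactly the standard argument that the paper leaves implicit: the paper offers no proof, remarking only that the result ``is immediate and can be taken as a definition of the vertical bundle.'' Your explicit construction of the maps (inclusion of \(V(\pi)\) and the packaged differential \(d\pi\)), together with the use of local triviality to get fiberwise surjectivity and locally constant rank of the kernel, supplies precisely the routine details the author omits.
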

The following corollary is an immediate consequence of the above result and the Whitney formula for the total Chern class.
\begin{corollary}
	There is the following identity of total Chern classes induced by the sequence \eqref{eq:vertical_sequence}.
	\[
		c(TE)=c(V(\pi))c(\pi^*TB).
	\]
\end{corollary}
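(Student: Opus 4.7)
The plan is to apply the Whitney product formula for total Chern classes directly to the short exact sequence of complex vector bundles established in \Cref{prop:vertical_sequence}. Recall that the Whitney formula states: for any short exact sequence $0 \to A \to B \to C \to 0$ of complex vector bundles on a common base, the total Chern classes satisfy $c(B) = c(A) \cd c(C)$ in the cohomology ring of the base.

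Concretely, I would first invoke \Cref{prop:vertical_sequence} to obtain the sequence
\[
0 \to V(\pi) \to TE \to \pi^* TB \to 0
\]
of complex vector bundles on the total space $E$. Applying the Whitney formula to this sequence on the nose yields
\[
c(TE) = c(V(\pi)) \cd c(\pi^* TB),
\]
which is precisely the claimed identity.

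There is no genuine obstacle here: the corollary is a purely formal consequence of \Cref{prop:vertical_sequence} together with a standard property of Chern classes. The only substantive input is the fibre bundle hypothesis on $\pi$, which ensures that the kernel subsheaf $V(\pi)$ is in fact a subbundle and that the cokernel is locally free of the correct rank, so that one has a short exact sequence of vector bundles (rather than merely of sheaves) to which Whitney's formula applies. Since this is already built into \Cref{prop:vertical_sequence}, no additional work is required, and the proof reduces to a single invocation of the Whitney formula.
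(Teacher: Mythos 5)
Your proof is correct and is exactly the argument the paper intends: the corollary is stated as an immediate consequence of \Cref{prop:vertical_sequence} and the Whitney product formula, which is precisely your single invocation. Nothing further is needed.
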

\subsection{Chern classes of complete toric varieties}
\begin{construction}
	Let \(\Sigma\) be a complete polyhedral fan in a lattice \(N\). Denote by \(X_\Sigma\) the corresponding toric variety. Denote by \(\Sigma(1)\) the set of rays of \(\Sigma\). The \emph{toric divisor} associated to a ray \(\rho\in \Sigma(1)\) is defined as the closure of the corresponding torus orbit in \(X_\Sigma\). We denote this divisor by \(D_\rho\). Accordingly, we denote by \([D_\rho]\) the class of the divisor \(D_\rho\) in the second cohomology group of \(X_\Sigma\) with integer coefficients.
\end{construction}
The following result is standard. A textbook reference is \cite[Proposition 13.1.2]{CoxLittleSchenk}.
\begin{theorem}
	Let \(X_\Sigma\) be a complete smooth toric variety. Then, the total Chern class of the tangent bundle of \(X_\Sigma\) is given by the following formula.
	\[
		c(TX_\Sigma)=\prod_{\rho\in \Sigma(1)}(1+[D_{\rho}]).
	\]
\end{theorem}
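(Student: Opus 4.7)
The plan is to derive the formula from the generalized Euler sequence for smooth complete toric varieties. On $X_\Sigma$, this sequence takes the form
\[
	0 \to \Omega^1_{X_\Sigma} \to \bigoplus_{\rho \in \Sigma(1)} \O_{X_\Sigma}(-D_\rho) \to \Cl(X_\Sigma)\otimes_{\Z} \O_{X_\Sigma} \to 0,
\]
where the rightmost term is a trivial vector bundle of rank $|\Sigma(1)|-\dim X_\Sigma$, using that $\Cl(X_\Sigma)$ is free abelian under the smoothness and completeness hypotheses. Since every term is locally free, I would dualize to produce the tangent Euler sequence
\[
	0 \to \Hom(\Cl(X_\Sigma),\Z)\otimes_{\Z}\O_{X_\Sigma} \to \bigoplus_{\rho\in\Sigma(1)}\O_{X_\Sigma}(D_\rho) \to TX_\Sigma \to 0.
\]

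Granted this sequence, the theorem falls out of the Whitney product formula. The kernel is a trivial bundle, hence has total Chern class $1$, while the middle term, being a direct sum of line bundles, has total Chern class $\prod_{\rho\in\Sigma(1)}(1+[D_\rho])$. Multiplicativity of $c$ on short exact sequences then delivers the identity $c(TX_\Sigma) = \prod_{\rho\in\Sigma(1)}(1+[D_\rho])$ with no further work.

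The substantive step is constructing the Euler sequence, and this is where I expect the real work to lie. My preferred route is via the Cox homogeneous coordinate construction: under smoothness and completeness, $X_\Sigma$ is realized as a geometric quotient $(\AA^{|\Sigma(1)|}\setminus Z(\Sigma))/G$ where $G=\Hom(\Cl(X_\Sigma),\CT)$ acts freely on the complement of the irrelevant locus. Descending the cotangent sequence of this $G$-torsor yields the Euler sequence displayed above, with the middle term reflecting the $\Cl(X_\Sigma)$-grading on the Cox ring and the rightmost term recording the Lie algebra of $G$. An alternative approach---gluing local cotangent sequences over the affine charts $U_\sigma$ via the character description of $\Omega^1_{U_\sigma}$---serves as a fallback if the descent along the Cox quotient becomes notationally unwieldy. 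Either route uses smoothness in an essential way, since both the freeness of the $G$-action and the smoothness of the $U_\sigma$ fail outside the smooth case.
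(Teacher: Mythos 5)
Your proof is correct and follows essentially the route the paper relies on: the paper cites \cite[Proposition 13.1.2]{CoxLittleSchenk}, whose argument is precisely the generalized Euler sequence (which the paper itself recalls in its section on Euler sequences) dualized and combined with the Whitney formula, with smoothness/completeness guaranteeing that $\Cl(X_\Sigma)=\mathrm{Pic}(X_\Sigma)$ is free and that the rays span so the sequence is available. Your Cox-quotient derivation of the Euler sequence is a valid way to obtain that ingredient, so there is no gap.
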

\Cref{thm:KM_twisted} implies the following description of the total Chern class of a fibered toric variety.
\begin{proposition}[{\cite[Proposition 3.7]{Khovanskii_Monin}}]
	Let \(E_\Sigma\to X_\Xi\) be a fibered toric variety. Then, the total Chern class of the tangent bundle of \(E_\Sigma\) is given by the following formula.
	\[
		c(TE_\Sigma)=\prod_{\rho\in \Xi(1)}(1+[\wt{D_\rho}])\cd \prod_{\rho\in \Sigma(1)}(1+[D_{\rho}]).
	\]
\end{proposition}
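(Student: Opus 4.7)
The plan is to derive this formula as a direct corollary of \Cref{thm:KM_twisted}, which identifies the total space of a fibered toric variety with a genuine (possibly infinite-dimensional, but in the smooth complete case, honest) toric variety whose fan is the twisted fan $\Xi\ltimes_\Phi\Sigma$. The starting observation is that once $E_\Sigma = X_{\Xi\ltimes_\Phi\Sigma}$, the standard textbook formula for the total Chern class of a smooth complete toric variety (recalled just above in the excerpt) applies directly and the only remaining task is to read off the ray set of the twisted fan and to match its toric divisors with the geometric classes appearing in the statement.

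First I would analyse the rays of $\Xi\ltimes_\Phi\Sigma$ inside the lattice $M\oplus N$. By the very definition of a twisted fan, every maximal cone is of the form $\wt\sigma+\tau$ where $\wt\sigma\subset M\oplus N$ is the graph of $\Phi|_\sigma$ and $\tau\subset \{0\}\oplus N$. Passing to $1$-dimensional faces, this gives a canonical bijection
\[
(\Xi\ltimes_\Phi\Sigma)(1)\;\cong\;\Xi(1)\;\sqcup\;\Sigma(1),
\]
where a ray $\rho\in\Xi(1)$ contributes the graph ray $\wt\rho\subset M\oplus N$ and a ray $\tau\in\Sigma(1)$ contributes $\{0\}\oplus\tau$. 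Smoothness and completeness of $\Xi\ltimes_\Phi\Sigma$ follow from the corresponding properties of $\Xi$ and $\Sigma$ together with the piecewise-linearity of $\Phi$ on each cone, so the standard formula yields
\[
c(TE_\Sigma)\;=\;\prod_{\rho\in\Xi(1)}\bigl(1+[D_{\wt\rho}]\bigr)\cd\prod_{\tau\in\Sigma(1)}\bigl(1+[D_\tau]\bigr).
\]

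The final step is the identification of divisor classes. The ray $\{0\}\oplus\tau$ cuts out the closure of the torus orbit in a fibre, so its toric divisor is literally the fibrewise toric divisor associated to $\tau\in\Sigma(1)$, justifying the factor $\prod_{\tau\in\Sigma(1)}(1+[D_\tau])$. For $\rho\in\Xi(1)$, the divisor $D_{\wt\rho}$ lives over the divisor $D_\rho\subset X_\Xi$ under the projection $E_\Sigma\to X_\Xi$; by construction this is the distinguished ``lift'' of $D_\rho$ to the total space, which is exactly the meaning of the notation $\wt{D_\rho}$. Writing $[\wt{D_\rho}]$ for its cohomology class gives the first factor and completes the formula.

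The main conceptual obstacle is verifying the ray identification and the divisor correspondence in the second factor: one must check that the graph ray $\wt\rho$ really does give a well-defined primitive lattice generator in $M\oplus N$ (using that $\Phi(\rho)$ is a single integral vector because $\Phi$ is linear on $\rho$), and that the resulting toric divisor in $X_{\Xi\ltimes_\Phi\Sigma}$ coincides with the geometrically defined $\wt{D_\rho}$ used throughout \cite{Khovanskii_Monin}. Once these bookkeeping points are settled, the result is a one-line consequence of \Cref{thm:KM_twisted} and the Chern class formula for complete smooth toric varieties.
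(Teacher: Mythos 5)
Your proposal is correct and follows exactly the route the paper intends: the paper states this proposition as a citation of Khovanskii--Monin and notes that it is implied by \Cref{thm:KM_twisted} together with the standard formula \(c(TX_\Sigma)=\prod_{\rho\in\Sigma(1)}(1+[D_\rho])\) for smooth complete toric varieties, which is precisely the argument you spell out (ray set of the twisted fan as \(\Xi(1)\sqcup\Sigma(1)\), unimodularity and completeness preserved, divisor classes matched). No gaps; your write-up simply makes explicit the bookkeeping the paper leaves to the reference.
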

An immediate corollary of this result is the following formula for the total Chern class of the vertical subbundle of a fibered toric variety.
\begin{corollary}\label{cor:vertical_chern_class_fibered_case}
	Let \(\pi_\Xi:E_\Xi\to X_\Sigma\) be a fibered toric variety. Then, the total Chern class of the vertical subbundle of the tangent bundle of \(E\) is given by the following formula.
	\[
		c(V(\pi_\Xi))=\prod_{\rho\in \Sigma(1)}(1+[\wt{D_\rho}]).
	\]
	Here \([\wt{D_\rho}]\) is the cohomology class of the toric divisor associated to the ray \(\rho\) in the fan \(\Sigma\) viewed as a cohomology class in the second cohomology group of \(E_\Xi\) with integer coefficients.
\end{corollary}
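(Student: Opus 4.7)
The plan is to obtain $c(V(\pi_\Xi))$ as a quotient of two Chern-class formulas we already control. The vertical exact sequence from \Cref{prop:vertical_sequence} reads
\[
    0 \to V(\pi_\Xi) \to TE_\Xi \to \pi_\Xi^{*} TX_\Sigma \to 0,
\]
and multiplicativity of the total Chern class under short exact sequences (the Whitney formula, already recorded in the corollary immediately preceding the statement) gives
\[
    c(TE_\Xi) \;=\; c(V(\pi_\Xi)) \cdot \pi_\Xi^{*} c(TX_\Sigma).
\]
Since $\pi_\Xi^{*} c(TX_\Sigma)$ is a unit in $H^{*}(E_\Xi;\Z)$ (its degree-zero part is $1$), I can solve for $c(V(\pi_\Xi))$ by dividing, so the whole argument reduces to computing the numerator and the denominator separately and identifying the factor that cancels.

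Both inputs are on hand. The numerator is supplied by the preceding Proposition (the Khovanskii--Monin formula obtained from \Cref{thm:KM_twisted}): it factors as a product of two pieces, one ranging over the rays of the base fan (built from the graph-divisor classes $[\wt{D_\rho}]$) and one ranging over the rays of the fiber fan. For the denominator, I apply the standard smooth-complete toric formula $c(TX_\Sigma)=\prod_{\rho\in\Sigma(1)}(1+[D_\rho])$ and pull back along $\pi_\Xi$. The geometric input required to match things up is the identification
\[
    \pi_\Xi^{*}[D_\rho] \;=\; [\wt{D_\rho}] \qquad \text{for each base ray } \rho,
\]
which follows directly from the twisted-fan description of the total space in \Cref{thm:KM_twisted}: the preimage under $\pi_\Xi$ of a toric divisor of the base $X_\Sigma$ is exactly the toric divisor of $E_\Xi$ associated to the graph of $\Phi$ restricted to the corresponding cone.

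With this identification in hand, the factor indexed by the base rays in the numerator cancels precisely against the pulled-back denominator, and what is left is the factor indexed by the fiber rays, which is exactly the expression asserted in the statement. I expect the divisor-identification step to be the only nontrivial point in the proof, since it is where the geometry of the twisted fan enters; the remaining work is elementary bookkeeping in the cohomology ring, and no machinery beyond \Cref{prop:vertical_sequence}, \Cref{thm:KM_twisted}, and the standard toric Chern class formula is needed.
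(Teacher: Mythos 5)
Your argument is correct and is exactly the derivation the paper intends (the paper labels the corollary ``immediate'' from the Khovanskii--Monin formula together with the Whitney identity \(c(TE)=c(V(\pi))\,c(\pi^*TB)\) recorded just before it): cancel the pulled-back base factor, using that the total Chern class of the base is a unit. You have in fact supplied the one detail the paper leaves implicit, namely the identification \(\pi^*[D_\rho]=[\wt{D_\rho}]\) for base rays coming from the twisted-fan description (with multiplicity one, since the primitive generator of the graph ray maps to the primitive generator of \(\rho\)), so no changes are needed.
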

\subsection{Khovanskii--Monin conjecture}
In \cite[Conjecture 3.6]{Khovanskii_Monin}, Khovanskii and Monin conjectured the following formula for the total Chern class of the tangent bundle of a toric variety bundle.
\begin{theorem}\label{thm:chern_class_formula}
	Let \(\pi:E_\Sigma\to B\) be a toric variety bundle. Then, the total Chern class of the tangent bundle of \(E\) is given by the following formula.
	\[
		c(TE)=\pi^*c(TB)\cd \prod_{\rho\in \Sigma(1)}(1+[\wt{D_{\rho}}]).
	\]
\end{theorem}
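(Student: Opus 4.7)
The plan is to combine the vertical exact sequence \eqref{eq:vertical_sequence} with the universal-bundle reduction established in \Cref{prop:toric_bundle_universal} and its surrounding material. By the corollary to \Cref{prop:vertical_sequence}, we have
\[
  c(TE)=c(V(\pi))\cdot \pi^*c(TB),
\]
so the theorem reduces to identifying the total Chern class of the vertical subbundle $V(\pi)$ with the product $\prod_{\rho\in\Sigma(1)}(1+[\widetilde{D_\rho}])$. The bulk of the proof is therefore an analysis of $V(\pi)$ via pullback from the universal toric variety bundle.

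First I would invoke \Cref{prop:toric_bundle_universal} to realize the toric variety bundle $E_\Sigma\to B$ as a pullback of the universal toric variety bundle $(E_N)_\Sigma\to BT_N$ along a classifying map $f\colon B\to BT_N$. Denote by $\widetilde f\colon E_\Sigma\to (E_N)_\Sigma$ the induced map of total spaces. Applying \Cref{prop:vertical_isomorphism_fiber_square} to this fibre square, I obtain $V(\pi)\cong \widetilde f^{\,*}V((E_N)_\Sigma\to BT_N)$. Since the universal bundle is a fibered toric variety (with $BT_N$ in place of a finite-dimensional toric base), \Cref{cor:vertical_chern_class_fibered_case} yields
\[
  c\bigl(V((E_N)_\Sigma\to BT_N)\bigr)=\prod_{\rho\in\Sigma(1)}\bigl(1+[\widetilde{D_\rho}]^{\mathrm{univ}}\bigr),
\]
where $[\widetilde{D_\rho}]^{\mathrm{univ}}\in H^2((E_N)_\Sigma;\Z)$ is the class of the toric divisor in the universal total space. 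By naturality of Chern classes under pullback and by definition of the classes $[\widetilde{D_\rho}]$ on $E_\Sigma$ as the pullbacks of $[\widetilde{D_\rho}]^{\mathrm{univ}}$, this gives the desired formula for $c(V(\pi))$.

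The only genuine obstacle is that \Cref{cor:vertical_chern_class_fibered_case} was stated for honest (finite-dimensional) fibered toric varieties, whereas the universal base $BT_N\simeq (\CP^\infty)^n$ is infinite-dimensional. I would handle this by approximation: replace $BT_N$ by the finite Grassmannian-style truncation $(\CP^m)^n$ for $m$ large and form the associated fibered toric variety $(E_N^{(m)})_\Sigma\to (\CP^m)^n$, which is a genuine fibered toric variety in the sense of \Cref{thm:KM_twisted}. In each fixed cohomological degree the inclusion into the infinite-dimensional model induces an isomorphism for $m$ sufficiently large, so the Chern class identity of \Cref{cor:vertical_chern_class_fibered_case} for the truncations passes to a colimit identity on $(E_N)_\Sigma$. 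Composing with $\widetilde f^{\,*}$ (which commutes with Chern classes and sends $[\widetilde{D_\rho}]^{\mathrm{univ}}$ to $[\widetilde{D_\rho}]$) yields the theorem.
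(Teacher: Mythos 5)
Your proposal is correct and follows essentially the same route as the paper: reduce to the vertical subbundle via the Whitney formula, pull back from the universal toric variety bundle using \Cref{prop:toric_bundle_universal} and \Cref{prop:vertical_isomorphism_fiber_square}, and apply \Cref{cor:vertical_chern_class_fibered_case} to the universal (fibered toric variety) model. Your explicit finite-dimensional approximation of $BT_N$ by $(\CP^m)^n$ is a welcome extra precaution that the paper's proof leaves implicit.
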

\begin{proof}
	This result is a consequence of \Cref{prop:toric_bundle_universal,prop:vertical_isomorphism_fiber_square,prop:vertical_sequence}. Indeed, by \Cref{prop:toric_bundle_universal} the universal toric variety bundle of type \((N,\Sigma)\) is a fibered toric variety, hence we have the following formula for the total Chern class of the vertical subbundle by \Cref{cor:vertical_chern_class_fibered_case}.
	\[
		c(V((\pi_N)_\Sigma))=\prod_{\rho\in \Sigma(1)}(1+[\wt{D_{\rho}}]).
	\]
	By \Cref{prop:vertical_isomorphism_fiber_square}, the vertical subbundle of the toric variety bundle \(E_\Sigma\) is isomorphic to the pullback of the vertical subbundle of the universal toric variety bundle along the map \(B\to BT_N\). By \Cref{prop:vertical_sequence}, the total Chern class of the tangent bundle of \(E_\Sigma\) is given by the product of the total Chern class of the vertical subbundle and the total Chern class of the pullback. The result follows.
\end{proof}
\section{Chern Classes of Unitary Generalized Quasitoric Manifold Bundles}\label{sec:quasitoric_bundles}
\subsection{Generalized quasitoric manifolds}
Below, we provide a brief summary of the definition and construction of generalized quasitoric manifolds. For more details, see \cite[\S 2]{Khovanskii_Monin_Limonchenko} and \cite[\S 7.3]{BuPa}.

Let \(K\) be an abstract simplicial complex on a set \(S\). Let \(\DD\) and \(\SS\) be respectively the unit disk and the unit circle in \(\C\). Then we can define the moment-angle complex associated to \(K\) as follows.
\[
	Z(K)=\bigcup_{\sigma\in K} \DD^\sigma \times \SS^{S\setminus \sigma}\sse \C^S.
\] 

We assume now that \(K\) arises as a cone complex of a complete simplicial fan \(\Sigma\) in \(\R^n\). Denote by \(N\) the cocharacter lattice of the compact real torus \(T^n.\)
Assume that there is a \emph{characteristic map} \(S=\Sigma(1)\to N\) such that whenever \(\sigma\in K\) is a face the collection \(\{\Lambda(s)\mid s\in \sigma\}\) can be extended to a basis of the lattice \(N\). By abuse of notation we denote by \(\Lambda\) also the map it induces from \(T^S\to T^n\) by sending \(e_s\to \Lambda(s).\) In this case, we can define the \emph{generalized quasitoric manifold bundle} associated to \(K\) as follows.
\[
	X_{\Sigma,\Lambda}=Z(K)/\exp(\ker \Lambda).
\]

Then, under the additional assumption that the fan \(\Sigma\) is star-shaped, the manifold \(X_{\Sigma,\Lambda}\) is a smooth manifold. Clearly, a generalized quasitoric manifold admits a residual action of the compact torus \(T^n\) on it.
\begin{construction}
	Given a principal \(T^n\)-bundle \(\pi:E\to B\) and a generalized quasitoric manifold \(X_{\Sigma,\Lambda}\), we can form the associated bundle \(E\times_{T^n} X_{\Sigma,\Lambda}\to B\). We denote this bundle by \(\pi_\Sigma:E_\Sigma\to B\).
\end{construction}

\begin{definition}
	When the generalized quasitoric manifold \(X_{\Sigma,\Lambda}\) has an equivariant stably complex structure, we say, following \cite{Masuda}, that \(X_{\Sigma,\Lambda}\) is a \emph{unitary generalized quasitoric manifold}. In this case, we say that the associated bundle \(\pi_\Sigma:E_\Sigma\to B\) is a \emph{unitary generalized quasitoric manifold bundle}.
\end{definition}
\begin{notation}
	 To somewhat simplify the notation, we will abbreviate the term \emph{unitary generalized quasitoric} manifold (respectively bundle) to \UGQT-manifold (respectively bundle).
\end{notation}
\begin{lemma}\label{prop:stable_complexity_of_quasitoric_manifold_bundles}
	Let \(\pi:E_{\Sigma,\Lambda}\to B\) be a \UGQT-manifold bundle over a base space \(B\). Then, it admits a fibrewise stably complex structure equivariant for the torus action on the fibres. Moreover, if \(B\) is a stably-complex manifold, then \(E_\Sigma\) is a stably-complex manifold.
\end{lemma}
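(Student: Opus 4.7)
The plan is to build the required stable complex structure in two stages: first upgrade the vertical bundle $V(\pi)$ to a stably complex bundle, then glue this with a stable complex structure on the horizontal part using the vertical exact sequence of \Cref{prop:vertical_sequence}.

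For the first stage, I would unpack the assumption that $X_{\Sigma,\Lambda}$ is a \UGQT-manifold. This means there is a $T^n$-equivariant isomorphism of real bundles
\[
	TX_{\Sigma,\Lambda}\oplus \underline{\R^{2k}}\cong \xi
\]
for some $T^n$-equivariant complex vector bundle $\xi\to X_{\Sigma,\Lambda}$. The associated bundle construction $E\times_{T^n}(-)$ sends $T^n$-equivariant complex vector bundles on $X_{\Sigma,\Lambda}$ to complex vector bundles on $E_{\Sigma,\Lambda}=E\times_{T^n} X_{\Sigma,\Lambda}$, commutes with direct sums, and sends trivial equivariant bundles to trivial bundles. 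Applying it to the isomorphism above, and using that the vertical tangent bundle of an associated bundle is $V(\pi)\cong E\times_{T^n} TX_{\Sigma,\Lambda}$, one obtains an isomorphism
\[
	V(\pi)\oplus \underline{\R^{2k}}\cong E\times_{T^n}\xi
\]
of real vector bundles on $E_{\Sigma,\Lambda}$ whose right-hand side is complex. This exhibits the claimed fibrewise stable complex structure on $V(\pi)$; because it is manufactured from the $T^n$-equivariant structure on $\xi$ restricted to each fibre, it is equivariant for the residual torus action on the fibres.

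For the second stage, assume that $B$ is stably complex, so that $TB$ admits a stable complex structure, and hence so does the pullback $\pi^*TB$. By \Cref{prop:vertical_sequence} we have an exact sequence
\[
	0\to V(\pi)\to TE_{\Sigma,\Lambda}\to \pi^*TB\to 0
\]
of smooth real vector bundles, which splits smoothly (every short exact sequence of smooth real vector bundles does, via a choice of Riemannian metric). Therefore $TE_{\Sigma,\Lambda}\cong V(\pi)\oplus \pi^*TB$ as real bundles, and a direct sum of two stably complex bundles is stably complex, completing the argument.

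The only subtle point is confirming that the associated bundle functor really does transport an equivariant complex structure on $\xi$ to an honest complex structure on $E\times_{T^n}\xi$; this hinges on the fact that the $T^n$-action on $\xi$ acts by complex-linear maps, so the diagonal quotient respects the fibrewise complex structure. Beyond this, everything is standard: the construction of $V(\pi)$ as an associated bundle, and the splitting of smooth short exact sequences of real bundles. There are no other genuine obstacles.
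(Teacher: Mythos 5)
Your proof is correct and follows the same route as the paper: the paper simply asserts that the vertical subbundle is stably complex "by assumption" and then invokes the exact sequence of \Cref{prop:vertical_sequence}, exactly as in your second stage. Your first stage merely fills in the detail the paper leaves implicit, namely that the associated bundle functor $E\times_{T^n}(-)$ transports the equivariant stable complex structure on $TX_{\Sigma,\Lambda}$ to one on $V(\pi)$.
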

\begin{proof}
	The result is immediate, since the vertical subbundle of a \UGQT-manifold bundle is a stably-complex vector bundle by assumption. The second assertion follows from the exact sequence of vector bundles associated to the fibre bundle \(\pi:E_{\Sigma,\Lambda}\to B\) given by \Cref{prop:vertical_sequence}. The vertical subbundle is stably complex, and the pullback of the tangent bundle of \(B\) is stably complex by assumption. Thus, the total space \(E_{\Sigma,\Lambda}\) is stably complex.
\end{proof}
\begin{remark}
	By \cite[Corollary 7.3.16]{BuPa} in the case when the fan \(\Sigma\) is the normal fan of a simple convex polytope \(P\) in \(\R^n\), the generalized quasitoric manifold \(X_{\Sigma,\Lambda}\) is a \UGQT-manifold. In this case, the associated bundle \(\pi_\Sigma:E_\Sigma\to B\) is a \UGQT-bundle. However, it is not immediately clear whether any generalized quasitoric manifold is a \UGQT-manifold.
\end{remark}
\subsection{Fibered generalized quasitoric manifolds}
\begin{definition}
	A fibered generalized quasitoric manifold is a generalized quasitoric manifold bundle \(\pi:E_{\Sigma,\Lambda}\to X_{\Xi,\Omega}\) such that the base space \(X_{\Xi,\Omega}\) is a generalized quasitoric manifold and the total space \(E_{\Sigma,\Lambda}\) is a generalized quasitoric manifold. Additionally, the map \(\pi\) is equivariant for the torus action on the base and the total space.
\end{definition}
\begin{definition}
	Let \((\Sigma,\Lambda)\) be a pair of a fan \(\Sigma\) in a lattice \(N\) and a characteristic map \(\Lambda:\Sigma(1)\to N\). Let \((\Xi,\Omega)\) be a pair of a fan \(\Xi\) in a lattice \(M\) and a characteristic map \(\Omega:\Xi(1)\to M\). For \(\Phi:|\Xi|\to M_\Sigma\) a piecewise linear map, linear on each cone of \(\Xi\). We define the \emph{twisted pair} \((\Xi\ltimes_\Phi \Sigma,\Omega\times_\Phi \Xi)\) as follows. By definition, the twisted pair is given by the following data. 
\begin{itemize}
	\item The fan \(\Xi\ltimes_\Phi \Sigma\) in the lattice \(M\oplus N\) is defined as follows.
	\[
		\Xi\ltimes_\Phi \Sigma=\{\wt{\sigma}+\tau \mid \sigma\in \Xi, \tau\in \Sigma\},\quad \wt{\sigma} \text{ is the graph of } \Phi\vert_\sigma.
	\]
	\item The characteristic map \(\Omega\times_\Phi \Lambda:\Xi(1)\times \Sigma(1)\to M_\Sigma\) is defined as follows.
	\[		
		(\Omega\times_\Phi \Lambda)\vert_{\Sigma(1)}=\Lambda,\quad (\Omega\times_\Phi \Lambda)\vert_{\Xi(1)}=\Phi\circ \Omega.
	\]
\end{itemize}
\end{definition}
\begin{theorem}\label{thm:KM_twisted_for_generalized_quasitoric_manifolds}
	Let \(E_{\Sigma,\Lambda} \to X_{\Xi,\Omega}\) be a generalized quasitoric manifold bundle over generalized quasitoric manifold \(X_{\Xi,\Omega}\). Then, it is equivariantly diffeomorphic to a fibered quasitoric manifold determined by the twisted fan \(\Xi\ltimes_\Phi \Sigma\) and the characteristic map \(\Lambda\times \Omega\). Here \(\Phi:|\Xi|\to M_\Sigma\) is a piecewise linear map, linear on each cone of \(\Xi\).
\end{theorem}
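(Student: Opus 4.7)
The plan is to adapt the strategy of \Cref{thm:KM_twisted} to the topological setting by replacing Cox's quotient description of a toric variety with the moment-angle complex description \(X_{\Sigma,\Lambda}=Z(K_\Sigma)/\exp(\ker\Lambda)\). The underlying principle is the same: a toric variety bundle is determined by its structural principal torus bundle, and that principal bundle can be encoded by piecewise linear clutching data; once the data is in hand, the total space is assembled from the moment-angle complex of the twisted fan.

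First, I would extract the piecewise linear map \(\Phi:|\Xi|\to M_\Sigma\) from the principal \(T_M\)-bundle \(E\to X_{\Xi,\Omega}\) whose associated bundle is \(E_{\Sigma,\Lambda}\). The base \(X_{\Xi,\Omega}\) has an orbit-type stratification by characteristic submanifolds corresponding to the rays of \(\Xi\), and the homotopy class of its classifying map \(X_{\Xi,\Omega}\to BT_M=(\CP^\infty)^{\rk M}\) is captured by choices of line bundles on each such characteristic submanifold. Assembling these choices across the cones of \(\Xi\) yields the required piecewise linear map \(\Phi\), exactly as in the algebraic situation.

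Second, I would identify the total space as a moment-angle quotient. The moment-angle complex \(Z(K_{\Xi\ltimes_\Phi\Sigma})\) of the twisted fan splits, at the level of each maximal cone, into a product of a moment-angle piece for \(\Xi\) and one for \(\Sigma\). The characteristic map \(\Omega\times_\Phi\Lambda\) is designed so that its kernel subgroup \(\exp(\ker(\Omega\times_\Phi\Lambda))\) acts as the combined quotient: first quotienting by the fibre kernel reproduces the fibre \(X_{\Sigma,\Lambda}\), then further quotienting by the diagonal combination dictated by \(\Phi\) implements the associated bundle construction over \(X_{\Xi,\Omega}\). A local-to-global check over the covering of \(X_{\Xi,\Omega}\) by the open equivariant charts indexed by maximal cones of \(\Xi\) shows that these quotients glue to an equivariant diffeomorphism \(E_{\Sigma,\Lambda}\cong Z(K_{\Xi\ltimes_\Phi\Sigma})/\exp(\ker(\Omega\times_\Phi\Lambda))\).

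The main obstacle is the first step: constructing \(\Phi\) and showing that different choices yield equivariantly diffeomorphic twisted data. In the algebraic setting of \Cref{thm:KM_twisted} this follows from the toric divisor description of Picard classes. In the topological setting we must instead argue stratum by stratum and verify that the piecewise linear extension across cones exists and is unique up to the ambiguity noted in the remark following \Cref{thm:KM_twisted}. Once \(\Phi\) is in hand, the remaining verifications — that the moment-angle complex of the twisted fan recovers the total space, and that the torus action and (if needed) the stably complex structure transport correctly — are routine consequences of the functoriality of the moment-angle complex construction combined with the associated bundle construction.
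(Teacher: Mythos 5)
Your overall strategy (read off $\Phi$ from the characteristic classes of the principal bundle, then exhibit the total space as the moment-angle quotient for the twisted pair) is a reasonable alternative to the paper's route, but it has a genuine gap at exactly the point the paper's proof is designed to handle: equivariance. The theorem asserts an \emph{equivariant} diffeomorphism with a fibered generalized quasitoric manifold, whose total space carries an action of the large torus $T^k\times T^n$ ($T^k$ the half-dimensional torus of the base $X_{\Xi,\Omega}$, $T^n$ the fibre torus). A priori $E_{\Sigma,\Lambda}$ only carries the fibrewise $T^n$-action, so the existence of the extended action --- equivalently, a lift of the $T^k$-action on $X_{\Xi,\Omega}$ to the principal $T^n$-bundle $E$ commuting with the structure group --- is itself the substantive claim. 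Your ``local-to-global check over the open equivariant charts'' silently assumes equivariant local trivializations with $T^k$-compatible transition data, which is precisely the equivariant structure whose existence must be proved: an arbitrary principal torus bundle over a $T^k$-space need not admit such a lift. The paper supplies this by splitting the associated rank-$n$ complex vector bundle as $L_1\oplus\cdots\oplus L_n$, using surjectivity of $H^2_{T^k}(X_{\Xi,\Omega})\to H^2(X_{\Xi,\Omega})$ to endow each $L_i$ with a $T^k$-equivariant structure, and then averaging a metric so the action is unitary, which turns $E$ into a $T^k$-equivariant principal bundle; only after that does the combinatorial identification with the twisted pair $(\Xi\ltimes_\Phi\Sigma,\Omega\times_\Phi\Lambda)$ become a ``direct calculation.'' Without an argument of this kind your construction at best yields a diffeomorphism over the base that is equivariant for the fibrewise $T^n$ only, not the statement claimed.

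A secondary, smaller issue: your first step extracts $\Phi$ from ``choices of line bundles on each characteristic submanifold,'' but what is actually used is that $H^2(X_{\Xi,\Omega};\Z)$ is spanned by the classes of the characteristic submanifolds, so each of the $n$ first Chern classes of the split bundle decomposes as an integer combination of ray classes; those integers define $\Phi$ on $\Xi(1)$ and extend linearly on each cone (the non-uniqueness of the decomposition matching the non-uniqueness of $\Phi$ noted after \Cref{thm:KM_twisted}). That part is repairable as stated; the missing equivariant lifting (equivariant line bundle structures plus averaging, or some substitute) is the real gap.
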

The above result is proved similarly to \cite[Theorem 2.8]{Khovanskii_Monin} and so we only sketch the proof. The key idea is to prove equivariance of the principal bundle \(E\to X_{\Xi,\Omega}\) by averaging over the torus acting on the base and then use the combinatorial description of equivariant morphisms between generalized quasitoric manifolds.
\begin{proof}[Proof sketch]
	Given a principal \(T^n\)-bundle \(\pi:E\to X_{\Xi,\Omega}\), we can associate to it a split complex \(n\)-dimensional vector bundle \(L_1\oplus\ldots \oplus L_n\) over \(X_{\Xi,\Omega}\) with a fiberwise action of the torus~\(T^n\). Denote by \(T^k\) the half-dimensional torus acting on \(X_{\Xi,\Omega}.\) Each \(L_i\) is isomorphic to a \(T^k\)-equivariant line bundle because the morphism from the degree two \(T^k\)-equivariant cohomology to the degree two ordinary cohomology is surjective.
	
	By a standard averaging argument, there exists a \(T^k\)-invariant metric on \(L_1\oplus\ldots \oplus L_n\) such that the action of \(T^k\) on the fibers of \(L_1\oplus\ldots \oplus L_n\) is unitary. Consequently, the principal \(T^n\)-bundle \(\pi:E\to X_{\Xi,\Omega}\) can be viewed as the unitary principal bundle associated to the equivariant vector bundle \(L_1\oplus\ldots \oplus L_n\). Consequently, there is a \(T^k\times T^n\)-action on the total space \(E_{\Sigma,\Lambda}\) making the bundle \(\pi:E_{\Sigma,\Lambda}\to X_{\Xi,\Omega}\) a fibered generalized quasitoric manifold. It remains to observe that the pair associated to the manifold \(E_{\Sigma,\Lambda}\) is precisely the twisted pair \((\Xi\ltimes_\Phi \Sigma,\Omega\times_\Phi \Lambda)\) for some piecewise linear map \(\Phi:|\Xi|\to M_\Sigma\) linear on each cone of \(\Xi\) by a direct calculation.
\end{proof}
A crucial observation is that when we apply the associated bundle construction to the universal bundle
\[
	E\xrightarrow{T^n} (\CP^\infty)^n
\]
we get a fibered generalized quasitoric manifold. More precisely, we have a colimit of fibered generalized quasitoric manifolds. We record this observation in the following corollary.
\begin{corollary}\label{cor:universal_generalized_quasitoric_manifold_bundle}
	The universal generalized quasitoric manifold bundle \({\pi:E_{\Sigma,\Lambda}\to (\CP^\infty)^n}\) is a fibered generalized quasitoric manifold. 
\end{corollary}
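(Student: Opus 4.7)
The plan is to reduce to the finite-dimensional setting by exhibiting the classifying space as a colimit of quasitoric manifolds. Recall from \Cref{ex:classifying_space_of_a_torus} that $(\CP^\infty)^n = \colim_k (\CP^k)^n$, and the universal $T^n$-bundle $E \to (\CP^\infty)^n$ is the colimit of the finite-dimensional principal $T^n$-bundles $((\C^{k+1}\setminus 0))^n \to (\CP^k)^n$. Crucially, each approximation $(\CP^k)^n$ is a smooth complete toric variety, hence a generalized quasitoric manifold: its fan $\Xi_k$ (the product of $k$-dimensional projective fans) comes equipped with the standard characteristic map $\Omega_k$.

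First, I would restrict the universal associated bundle $\pi:E_{\Sigma,\Lambda}\to (\CP^\infty)^n$ to each finite-dimensional stage. For each $k$, this yields a generalized quasitoric manifold bundle
\[
\pi_k : (E_k)_{\Sigma,\Lambda} \to (\CP^k)^n
\]
whose base is itself a generalized quasitoric manifold. Then I would apply \Cref{thm:KM_twisted_for_generalized_quasitoric_manifolds} to each $\pi_k$. This produces, for each $k$, a piecewise linear map $\Phi_k:|\Xi_k|\to M_\Sigma$ linear on each cone of $\Xi_k$, and an equivariant diffeomorphism between $(E_k)_{\Sigma,\Lambda}$ and the fibered generalized quasitoric manifold associated to the twisted pair $(\Xi_k\ltimes_{\Phi_k}\Sigma, \Omega_k\times_{\Phi_k}\Lambda)$.

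Next, I would verify that the fibered structures at successive stages are compatible under the natural inclusions $(\CP^k)^n\hookrightarrow (\CP^{k+1})^n$. The key observation is that the inclusion of projective spaces corresponds to the inclusion of fans $\Xi_k\subset \Xi_{k+1}$ that adds only new rays (without altering the existing cones or their characteristic vectors), and that the restriction of the universal bundle to $(\CP^k)^n$ is classified by the same cohomology classes as the bundle at stage $k$. Consequently, the piecewise linear maps $\Phi_k$ can be chosen consistently, namely so that $\Phi_{k+1}$ restricts to $\Phi_k$ on $|\Xi_k|$. With this compatible choice, the equivariant diffeomorphisms assemble into a diffeomorphism of the colimits, exhibiting $E_{\Sigma,\Lambda}$ as a colimit of fibered generalized quasitoric manifolds.

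The main obstacle is the compatibility argument: \Cref{thm:KM_twisted_for_generalized_quasitoric_manifolds} only produces the map $\Phi_k$ up to the ambiguity discussed in the toric setting (different choices give torically isomorphic twisted fans), so one must verify that the freedom in choosing $\Phi_k$ can always be resolved coherently in the limit. This reduces to tracking how $\Phi_k$ is constructed in the proof sketch of \Cref{thm:KM_twisted_for_generalized_quasitoric_manifolds}, where it arises from the first Chern classes of the summands $L_1\oplus\cdots\oplus L_n$; since these line bundles are themselves pulled back from the universal line bundles on $\CP^\infty$, their restrictions to $(\CP^k)^n$ and $(\CP^{k+1})^n$ are automatically compatible, forcing $\Phi_{k+1}|_{|\Xi_k|}=\Phi_k$ after a harmless choice of representatives.
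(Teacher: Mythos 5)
Your approach is essentially the paper's: the corollary is recorded there precisely as the observation that the universal bundle is a colimit of its finite-dimensional stages over $(\CP^k)^n$, each of which is a fibered generalized quasitoric manifold by \Cref{thm:KM_twisted_for_generalized_quasitoric_manifolds}, and your compatibility discussion of the maps $\Phi_k$ merely makes explicit what the paper leaves implicit. The one slip---calling $(\C^{k+1}\setminus 0)^n\to(\CP^k)^n$ a principal $T^n$-bundle, when the compact-torus approximations are $(S^{2k+1})^n\to(\CP^k)^n$ (as the paper itself notes in \Cref{subsec:equivariant_chern_classes})---is harmless and does not affect the argument.
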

\subsection{Formula for the total Chern class}
\begin{proposition}
	The total Chern class of a fibered \UGQT-manifold \({\pi:E_{\Sigma,\Lambda}\to X_{\Xi,\Omega}}\) is given by the following formula.
	\[	c(TE_{\Sigma,\Lambda})=\prod_{\rho\in \Sigma(1)\cup \Xi(1)}(1+[\wt{D_{\rho}}]).\]
	Here \([\wt{D_{\rho}}]\) are the cohomology classes induced from the fibre and the base quasitoric manifolds. 
\end{proposition}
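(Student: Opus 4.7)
The plan is to mirror the argument used for fibered toric varieties in \Cref{cor:vertical_chern_class_fibered_case}, but now in the stably complex topological setting. The two ingredients I need are the structure theorem \Cref{thm:KM_twisted_for_generalized_quasitoric_manifolds}, which identifies the total space $E_{\Sigma,\Lambda}$ of a fibered \UGQT-manifold with a single \UGQT-manifold attached to the twisted pair $(\Xi\ltimes_\Phi\Sigma,\,\Omega\times_\Phi\Lambda)$, and the Davis--Januszkiewicz/Buchstaber--Panov--Masuda formula expressing the total Chern class of a \UGQT-manifold as a product over its characteristic submanifolds.

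First, I would invoke \Cref{thm:KM_twisted_for_generalized_quasitoric_manifolds} to replace $E_{\Sigma,\Lambda}$ by the \UGQT-manifold $X_{\Xi\ltimes_\Phi\Sigma,\,\Omega\times_\Phi\Lambda}$. Using \Cref{prop:stable_complexity_of_quasitoric_manifold_bundles} together with the assumption that both base and fibre are unitary, I would check that the equivariant stably complex structure on the total space is the one induced from the twisted pair, so that the Davis--Januszkiewicz--Masuda formula applies to it. This formula, which is the topological counterpart to \cite[Proposition~13.1.2]{CoxLittleSchenk} used earlier and is recorded in \cite[\S 7.3]{BuPa}, asserts that for a \UGQT-manifold $X_{\Theta,\Psi}$ one has
\[
    c(TX_{\Theta,\Psi})=\prod_{\rho\in\Theta(1)}(1+[D_\rho]),
\]
with $[D_\rho]$ the Poincaré dual of the characteristic submanifold associated to $\rho$.

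Next, I would analyse the rays of the twisted fan $\Xi\ltimes_\Phi\Sigma$ in $M\oplus N$. By construction, every maximal cone has the form $\wt{\sigma}+\tau$ with $\wt{\sigma}$ the graph of $\Phi|_\sigma$ and $\tau\in\Sigma$, so the one-dimensional cones split into two disjoint families: those coming from rays of $\Sigma$ (embedded as $\{0\}\oplus\rho$) and those coming from rays of $\Xi$ (embedded as graphs of $\Phi$ on the corresponding ray). Hence
\[
    (\Xi\ltimes_\Phi\Sigma)(1)=\Sigma(1)\sqcup\Xi(1),
\]
and under this identification the characteristic submanifolds of the twisted \UGQT-manifold are precisely the ones that, when pushed into $E_{\Sigma,\Lambda}$, represent the classes $[\wt{D_\rho}]$ appearing in the statement—those coming from $\Sigma$ are the fibrewise characteristic submanifolds, and those coming from $\Xi$ are the preimages $\pi^{-1}(D_\rho)$ of the characteristic submanifolds of the base. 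Substituting into the Davis--Januszkiewicz--Masuda formula yields the claimed identity.

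The main obstacle is the middle step: identifying the characteristic submanifolds of the twisted \UGQT-structure on $E_{\Sigma,\Lambda}$ with the classes $[\wt{D_\rho}]$ coming geometrically from the bundle structure, and ensuring that the stably complex structure coming from \Cref{prop:stable_complexity_of_quasitoric_manifold_bundles} agrees with the one produced by \Cref{thm:KM_twisted_for_generalized_quasitoric_manifolds}. Everything else is a direct application of results already in hand; the combinatorics of $(\Xi\ltimes_\Phi\Sigma)(1)=\Sigma(1)\cup\Xi(1)$ and the bookkeeping of characteristic submanifolds is routine, but the compatibility of the two stably complex structures is what makes the Chern class formula extend cleanly from the algebraic fibered toric case to the topological \UGQT-case.
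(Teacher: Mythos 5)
Your proposal is correct and follows essentially the same route as the paper: identify $E_{\Sigma,\Lambda}$ with the \UGQT-manifold of the twisted pair via \Cref{thm:KM_twisted_for_generalized_quasitoric_manifolds} and then apply the Buchstaber--Panov formula \cite[Theorem 7.3.28]{BuPa} for the total Chern class of a quasitoric manifold. Your additional bookkeeping of the rays $(\Xi\ltimes_\Phi\Sigma)(1)=\Sigma(1)\sqcup\Xi(1)$ and of the compatibility of stably complex structures simply makes explicit what the paper leaves implicit.
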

\begin{proof}
	The result follows from the description of the total Chern class of a quasitoric manifold, see \cite[Theorem 7.3.28]{BuPa} and the fact that \(E_{\Sigma,\Lambda}\) is a quasitoric manifold by \Cref{thm:KM_twisted_for_generalized_quasitoric_manifolds}.
\end{proof}
\begin{corollary}
	The total Chern class of the vertical subbundle of a \UGQT-manifold bundle \(\pi:E_\Sigma\to B\) is given by the following formula.
	\[	c(V(\pi))=\prod_{\rho\in \Sigma(1)}(1+[\wt{D_{\rho}}]).\]
	Here \([\wt{D_{\rho}}]\) is the cohomology class induced in \(E_{\Sigma,\Lambda}\) by the characteristic manifold associated to the ray \(\rho.\)
\end{corollary}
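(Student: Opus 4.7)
The plan is to first upgrade the preceding proposition to a vertical-subbundle statement in the fibered setting, then apply the upgrade to the universal \UGQT-manifold bundle, and finally pull back along a classifying map to obtain the corollary. This exactly parallels the structure of the proof of \Cref{thm:chern_class_formula}, with \Cref{cor:universal_generalized_quasitoric_manifold_bundle} playing the role of \Cref{prop:toric_bundle_universal}.

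For the fibered step, let \(\pi\colon E_{\Sigma,\Lambda}\to X_{\Xi,\Omega}\) be a fibered \UGQT-manifold. The preceding proposition gives
\[
c(TE_{\Sigma,\Lambda})=\prod_{\rho\in\Sigma(1)\cup\Xi(1)}(1+[\wt{D_\rho}]),
\]
and the same proposition applied to the base reads off \(c(TX_{\Xi,\Omega})=\prod_{\rho\in\Xi(1)}(1+[D_\rho])\). The vertical sequence of \Cref{prop:vertical_sequence} together with the Whitney formula yields
\[
c(TE_{\Sigma,\Lambda})=c(V(\pi))\cdot \pi^*c(TX_{\Xi,\Omega}).
\]
Since by construction of the twisted pair the classes \([\wt{D_\rho}]\) with \(\rho\in\Xi(1)\) are precisely the pullbacks \(\pi^*[D_\rho]\), the factor \(\pi^*c(TX_{\Xi,\Omega})\) cancels the \(\Xi(1)\) part of the combined product, leaving \(c(V(\pi))=\prod_{\rho\in\Sigma(1)}(1+[\wt{D_\rho}])\).

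Now, by \Cref{cor:universal_generalized_quasitoric_manifold_bundle}, the universal \UGQT-manifold bundle \((\pi_N)_{\Sigma,\Lambda}\colon (E_N)_{\Sigma,\Lambda}\to (\CP^\infty)^n\) is a fibered \UGQT-manifold, so the formula just derived applies at the universal level (interpreting the infinite-dimensional target as the colimit of its truncations \((\CP^k)^n\)). For a general \UGQT-manifold bundle \(\pi\colon E_\Sigma\to B\), we write \(\pi\) as the pullback of the universal bundle along a classifying map \(\tau\colon B\to (\CP^\infty)^n\). By \Cref{prop:vertical_isomorphism_fiber_square}, \(V(\pi)\) is canonically isomorphic to the pullback \(\wt{\tau}^*V((\pi_N)_{\Sigma,\Lambda})\), and naturality of total Chern classes under pullback, together with the fact that the classes \([\wt{D_\rho}]\) on \(E_\Sigma\) are by definition the pullbacks of their universal counterparts, yields the desired formula.

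The main technical obstacle I anticipate is making the universal step fully rigorous despite its infinite-dimensional target: one should exhibit \((\pi_N)_{\Sigma,\Lambda}\) as a filtered colimit of honest finite-dimensional fibered \UGQT-manifolds, and verify that the fibered vertical formula is compatible with the resulting colimit of cohomology rings so that passage to the limit is harmless. This is exactly the same issue that arises in the proof of \Cref{thm:chern_class_formula} and can be handled in the same manner.
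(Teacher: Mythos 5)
Your proposal is correct and follows essentially the paper's own route: the paper's proof of this corollary is exactly your first step (the Whitney-formula cancellation against the base factor in the fibered-case total Chern class, quoted as being ``identical'' to \Cref{cor:vertical_chern_class_fibered_case}), and your additional reduction of a general base \(B\) to the universal fibered \UGQT-manifold via \Cref{cor:universal_generalized_quasitoric_manifold_bundle} and \Cref{prop:vertical_isomorphism_fiber_square} is the same pullback argument the paper invokes immediately afterwards in \Cref{thm:chern_class_quasitoric}. If anything, you are more explicit than the paper about the general-base and infinite-dimensional (colimit) issues, which is welcome but not a different method.
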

\begin{proof}
	The proof is identical to the proof given in \Cref{cor:vertical_chern_class_fibered_case}.
\end{proof}
\begin{theorem}\label{thm:chern_class_quasitoric}
	The total Chern class of the tangent bundle of a \UGQT-manifold bundle \(\pi:E_{\Sigma,\Lambda}\to B\) over a stably complex manifold is given by the following formula.
	\[
		c(TE_{\Sigma,\Lambda})=\pi^*c(TB)\cd \prod_{\rho\in \Sigma(1)}(1+[\wt{D_{\rho}}]).
	\]
\end{theorem}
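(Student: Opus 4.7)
The plan is to imitate verbatim the argument for \Cref{thm:chern_class_formula}, replacing each appearance of the universal toric variety bundle with the universal \UGQT-manifold bundle and invoking the quasitoric analogues of the intermediate lemmas. First, I would appeal to \Cref{cor:universal_generalized_quasitoric_manifold_bundle} to realize the universal \UGQT-manifold bundle \((E_N)_{\Sigma,\Lambda}\to (\CP^\infty)^n\) as (a colimit of) fibered \UGQT-manifolds. The preceding corollary then computes the total Chern class of its vertical subbundle as \(\prod_{\rho\in\Sigma(1)}(1+[\wt{D_{\rho}}])\).

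Next, for a general \UGQT-manifold bundle \(\pi:E_{\Sigma,\Lambda}\to B\), the underlying principal \(T^n\)-bundle admits a classifying map \(\tau:B\to (\CP^\infty)^n\), and functoriality of the associated bundle construction realizes \(E_{\Sigma,\Lambda}\) as the pullback of the universal \UGQT-manifold bundle along \(\tau\). Invoking \Cref{prop:vertical_isomorphism_fiber_square} identifies \(V(\pi)\) with \(\wt{\tau}^*V((\pi_N)_{\Sigma,\Lambda})\), and naturality of the total Chern class then yields
\[
    c(V(\pi)) = \prod_{\rho\in\Sigma(1)}(1+[\wt{D_{\rho}}]).
\]

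Finally, the stable complexity hypothesis on \(B\) combined with \Cref{prop:stable_complexity_of_quasitoric_manifold_bundles} ensures that \(E_{\Sigma,\Lambda}\) is stably complex, so that \(c(TE_{\Sigma,\Lambda})\) is defined. Applying the Whitney product formula to the short exact sequence
\[
    0 \to V(\pi) \to TE_{\Sigma,\Lambda} \to \pi^*TB \to 0
\]
furnished by \Cref{prop:vertical_sequence}, together with the formula above for \(c(V(\pi))\), delivers the desired identity
\[
    c(TE_{\Sigma,\Lambda}) = \pi^*c(TB)\cd \prod_{\rho\in\Sigma(1)}(1+[\wt{D_{\rho}}]).
\]

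The only real obstacle is the minor technical point that \Cref{prop:vertical_sequence} was stated for fibre bundles of complex manifolds, whereas here \(B\) is only stably complex and the fibres are smooth manifolds with an equivariant stably complex structure. The analogous short exact sequence of smooth real vector bundles exists unconditionally, and upon endowing \(V(\pi)\) and \(\pi^*TB\) with their (stably) complex structures, Whitney multiplicativity of \(c\) still applies because stabilization does not alter the total Chern class. I would record this observation as a brief remark before invoking \Cref{prop:vertical_sequence}, after which the proof proceeds exactly as in the algebraic case.
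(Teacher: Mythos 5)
Your proposal is correct and follows essentially the same route as the paper: the paper's own proof simply declares it "identical to the proof of \Cref{thm:chern_class_formula}" with \((\CP^\infty)^n\) now viewed as \(BT^n\) for the compact torus, invoking \Cref{cor:universal_generalized_quasitoric_manifold_bundle}, the pullback of the vertical bundle, and the stably complex structure from \Cref{prop:stable_complexity_of_quasitoric_manifold_bundles}. Your closing remark spelling out that the vertical exact sequence and Whitney multiplicativity still apply in the stably complex (rather than holomorphic) setting is a sensible elaboration of a point the paper leaves implicit, but it does not constitute a different argument.
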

\begin{proof}
	The proof is identical to the proof of \Cref{thm:chern_class_formula}. The only difference is we view \((\CP^\infty)^n\) as the classifying space for \(T^n\) instead of \((\CT)^n\). Thus by applying the same argument together with the existence of the stably complex structure on \(E_{\Sigma,\Lambda}\) afforded by \Cref{prop:stable_complexity_of_quasitoric_manifold_bundles} we obtain the result.
\end{proof}
\begin{remark}
	Note that \Cref{thm:chern_class_formula} is not a special case of the above theorem. The reason is that here we assume \(\Sigma\) arises from a polytope, whereas in \Cref{thm:chern_class_formula} we only assume \(\Sigma\) is a fan that determines a smooth toric variety. 
\end{remark}
\section{Concluding Remarks}\label{sec:Concluding_Remarks}
\subsection{Equivariant Chern classes of toric varieties and quasitoric manifolds}\label{subsec:equivariant_chern_classes}
In this section, we provide an alternative proof of Masuda's description of equivariant Chern classes for unitary generalized quasitoric manifolds. The calculation is based on the same idea as the calculation of the cohomology of toric varieties given in \cite[\S 4]{Khovanskii_Monin}. The key idea is to view the Borel construction of a generalized quasitoric manifold as a fibered generalized quasitoric manifold. The main advantage of this approach is that it reduces the equivariant problem to the non-equivariant one and provides a geometric interpretation of the equivariant result.

We have the following general result for the total equivariant Chern class of a generalized quasitoric manifold with an equivariant stably complex structure. 
\begin{theorem}[{\cite[Theorem 3.1]{Masuda}}]
	Let \(X\) be a manifold with a half-dimensional compact torus \(T\) action with isolated fixed point set and equivariant stably complex structure. Denote by \(\Sigma(1)\) the set of codimension \(2\) \(T\)-invariant submanifolds. Then, the total equivariant Chern class of the tangent bundle of \(X\) is given (modulo \(H_T^*(BT)\)-torsion) by the following formula.
	\[
		c^T(TX_{\Sigma,\Lambda})=\prod_{\rho\in \Sigma(1)}(1+[D_{\rho}]_T)
	\]
	Here \([D_{\rho}]_T\) is the equivariant cohomology class associated to the invariant submanifold \(\rho\in \Sigma(1).\)
\end{theorem}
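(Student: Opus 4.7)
The plan is to deduce Masuda's formula by applying \Cref{thm:chern_class_quasitoric} to the Borel fibration of the \UGQT-manifold $X = X_{\Sigma,\Lambda}$. Concretely, I would form the associated bundle
$$\pi \colon ET \times_T X_{\Sigma,\Lambda} \to BT,$$
which is tautologically the universal \UGQT-manifold bundle of type $(\Sigma,\Lambda)$. By \Cref{cor:universal_generalized_quasitoric_manifold_bundle} this is a fibered generalized quasitoric manifold (more precisely, a colimit of such), so the hypotheses of \Cref{thm:chern_class_quasitoric} and of its companion corollary on the vertical subbundle are satisfied.

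The next step is the standard dictionary between Borel equivariant cohomology of $X$ and ordinary cohomology of the Borel construction. Under $H^*_T(X) \cong H^*(ET \times_T X)$, the equivariant tangent bundle corresponds to $ET \times_T TX$, which is naturally isomorphic to the vertical subbundle $V(\pi)$ of the Borel fibration (its fibre over $[e,x]$ is canonically $T_xX$). Hence $c^T(TX) = c(V(\pi))$ in $H^*(ET \times_T X)$. Analogously, the equivariant class $[D_\rho]_T$ is represented by the submanifold $ET \times_T D_\rho \subset ET \times_T X$, which matches the class $[\wt{D_\rho}]$ attached to the corresponding ray in the fibered description of \Cref{sec:chern_classes}.

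With these identifications, the formula for the total Chern class of the vertical subbundle of a \UGQT-manifold bundle (the corollary preceding \Cref{thm:chern_class_quasitoric}) gives the desired equality
$$c^T(TX) = c(V(\pi)) = \prod_{\rho \in \Sigma(1)}(1 + [\wt{D_\rho}]) = \prod_{\rho \in \Sigma(1)}(1 + [D_\rho]_T).$$

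The main obstacle lies in accounting for the ``modulo $H^*_T(BT)$-torsion'' qualification in Masuda's statement. The equivariant stably complex structure on $X$ need not coincide with an honest complex tangent bundle; it differs by an equivariant trivial summand whose image in the Borel construction can contribute classes pulled back from $H^*(BT)$. The cleanest way to discharge this, I would argue, is to invoke the isolated fixed-point hypothesis: by Atiyah--Bott--Berline--Vergne localization, restriction to the fixed locus is injective modulo $H^*_T(BT)$-torsion, and at each fixed point the tangent space splits into one-dimensional weight spaces indexed by the rays of $\Sigma$ incident to that vertex, so the restrictions of the two sides of the above identity agree by a direct weight computation. The equivariant Chern class therefore agrees with the product $\prod (1+[D_\rho]_T)$ up to $H^*_T(BT)$-torsion, matching Masuda's formulation.
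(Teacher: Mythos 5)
Your proposal follows essentially the same route as the paper: view the Borel construction $ET\times_T X_{\Sigma,\Lambda}\to BT$ as the universal \UGQT-manifold bundle (\Cref{cor:universal_generalized_quasitoric_manifold_bundle}), identify $ET\times_T TX$ with the vertical subbundle of this fibration (the paper's \Cref{lem:Borel_of_Tangent}), and then apply the vertical-bundle Chern class formula coming from \Cref{thm:chern_class_quasitoric} (the paper's \Cref{thm:chern_equiv_basic}). The only divergence is your closing Atiyah--Bott--Berline--Vergne localization argument for the ``modulo $H_T^*(BT)$-torsion'' caveat, which is harmless but unnecessary in the \UGQT setting treated here: as the paper observes, the Borel-construction identification already yields the product formula on the nose, the torsion qualification being vacuous since $H_T^*(X_{\Sigma,\Lambda})$ is $H^*(BT)$-torsion-free.
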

Now consider a generalized quasitoric manifold \(X_{\Sigma,\Lambda}\) with an equivariant stably complex structure. Denote the half-dimensional compact torus acting on \(X_{\Sigma,\Lambda}\). Then we can form the Borel construction \((X_{\Sigma,\Lambda})_T=X_{\Sigma,\Lambda}\times_T ET\). The total space of this construction is a fibered generalized quasitoric manifold over the classifying space \(BT\) with fibre \(X_{\Sigma,\Lambda}\). Thus, we can apply \Cref{thm:chern_class_quasitoric} to obtain a formula for the total equivariant Chern class of the Borel construction. 

To make the argument slightly more psychologically comfortable, one may choose to work with a sequence of finite-dimensional approximations of the Borel construction, given by, for instance, \[\left(S^{2N+1}\right)^n\to \left(\CP^N\right)^n.\]However, we will not do this here for the sake of brevity. We have the following result, which immediately follows from \Cref{thm:chern_class_quasitoric}.
\begin{proposition}\label{thm:chern_equiv_basic}
	Let \(X_{\Sigma,\Lambda}\) be a \UGQT-manifold. Then, the total Chern class of the vertical bundle of the projection \((X_{\Sigma,\Lambda})_T\to BT\) is given by the following formula.
	\[
		c(V(\pi))=\prod_{\rho\in \Sigma(1)}(1+[D_{\rho}]_T).
	\]
	Here \([D_{\rho}]_T\) is the equivariant cohomology class of the characteristic submanifold associated to the ray \(\rho\) in the fan \(\Sigma\).
\end{proposition}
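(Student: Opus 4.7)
The plan is to recognize the Borel construction $(X_{\Sigma,\Lambda})_T = X_{\Sigma,\Lambda}\times_T ET \to BT$ as the associated bundle to the universal principal $T$-bundle $ET \to BT$ with fibre $X_{\Sigma,\Lambda}$. By the very construction of the Borel fibration, this is a \UGQT-manifold bundle in the sense of the previous section. Since the base $BT$ is built from an inverse system of smooth stably complex manifolds (for example, the finite products $(\CP^N)^n$ mentioned in the text), the hypotheses of \Cref{thm:chern_class_quasitoric} and of the preceding corollary on vertical Chern classes are in force (after taking colimits, or equivalently by working with the finite-dimensional approximations $X_{\Sigma,\Lambda}\times_T (S^{2N+1})^n$ and passing to the limit).

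Next, I would invoke the corollary on the vertical Chern class of a \UGQT-manifold bundle, which gives
\[
    c(V(\pi)) \;=\; \prod_{\rho\in \Sigma(1)}(1+[\wt{D_{\rho}}]),
\]
where $[\wt{D_{\rho}}]$ is the cohomology class, in $H^*((X_{\Sigma,\Lambda})_T)$, induced by the fibrewise characteristic submanifold attached to $\rho$.

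The remaining step, which is the real content of the statement, is to identify $[\wt{D_{\rho}}]$ with the equivariant cohomology class $[D_{\rho}]_T$. By definition $H^*_T(X_{\Sigma,\Lambda}) = H^*((X_{\Sigma,\Lambda})_T)$, and the equivariant class $[D_{\rho}]_T$ is by definition represented in the Borel model by the image of the invariant submanifold $D_{\rho}\subset X_{\Sigma,\Lambda}$ under the associated bundle construction, that is, by the submanifold $D_{\rho}\times_T ET \subset X_{\Sigma,\Lambda}\times_T ET$. But this is precisely the fibrewise characteristic submanifold $\wt{D_{\rho}}$ coming from $\rho\in \Sigma(1)$ applied to the universal $T$-bundle. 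So $[\wt{D_{\rho}}] = [D_{\rho}]_T$, and substituting into the previous display yields the asserted formula.

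The main (minor) obstacle is a bookkeeping one: making sure that the infinite-dimensional Borel construction is a legitimate input for \Cref{thm:chern_class_quasitoric}, and that the two a priori different descriptions of the characteristic submanifold classes (fibrewise characteristic submanifold $\wt{D_\rho}$ versus equivariant Poincar\'e dual $[D_\rho]_T$) really agree. Both points are handled by the finite-dimensional approximation $(\CP^N)^n$ of $BT$, together with the observation that the associated bundle construction is compatible with passing to invariant submanifolds of the fibre.
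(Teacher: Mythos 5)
Your proposal is correct and follows essentially the same route as the paper: view the Borel construction \((X_{\Sigma,\Lambda})_T\to BT\) as a \UGQT-manifold bundle (in fact a fibered generalized quasitoric manifold), invoke the vertical Chern class formula from the previous section, and identify the fibrewise characteristic submanifold classes \([\wt{D_\rho}]\) with the equivariant classes \([D_\rho]_T\), with the finite-dimensional approximations \((\CP^N)^n\) handling the infinite-dimensionality of \(BT\) exactly as the paper suggests. Your explicit identification \(\wt{D_\rho}=D_\rho\times_T ET\) is a useful spelling-out of a step the paper treats as immediate.
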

\begin{lemma}\label{lem:Borel_of_Tangent}
	Given a \(G\)-manifold \(M\) for some topological group, the Borel construction \(\left(TM\right)_G\) is isomorphic to the vertical subbundle of the projection \((M)_G\to BG\).
\end{lemma}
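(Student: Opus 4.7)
The strategy is descent along the principal $G$-bundle $EG\to BG$, reducing the statement to the trivial case. I form the pullback square
\[
\begin{tikzcd}
M\times EG \ar[r, "q"] \ar[d, "p"'] & (M)_G \ar[d, "\pi"] \\
EG \ar[r] & BG
\end{tikzcd}
\]
where $p$ is the projection onto the second factor and $q\colon (m,e)\mapsto [m,e]$ is the quotient by the diagonal $G$-action $g\cdot(m,e)=(gm,ge)$. Since this action is free (because the action of $G$ on $EG$ is free), $q$ is itself a principal $G$-bundle; and $p$ is the trivial fibre bundle $M\times EG\to EG$ with fibre $M$.

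First, by \Cref{prop:vertical_isomorphism_fiber_square} applied to this pullback square, $q^*V(\pi)\cong V(p)$. Since $p$ is the trivial $M$-bundle over $EG$, its vertical subbundle is canonically the pullback of $TM$ along the first projection $M\times EG\to M$, i.e.\ $TM\times EG$ as a vector bundle on $M\times EG$. Hence $q^*V(\pi)\cong TM\times EG$ as vector bundles on $M\times EG$. I would then promote this to a $G$-equivariant identification: the diagonal $G$-action on $M\times EG$ lifts canonically to both sides, namely to $q^*V(\pi)$ via descent along $\pi$, and to $TM\times EG$ via the differential of the $G$-action on $M$ in the first factor and the given action on $EG$ in the second. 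A routine tangent-functor calculation shows that the isomorphism furnished by \Cref{prop:vertical_isomorphism_fiber_square} intertwines these two lifts.

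Finally, since $q$ is a principal $G$-bundle, descent along $q$ yields
\[
V(\pi)\cong (q^*V(\pi))/G\cong (TM\times EG)/G=(TM)_G,
\]
which is the claimed isomorphism. The only substantive step is the $G$-equivariance check in the previous paragraph; everything else is formal manipulation with pullbacks of the vertical subbundle together with the definition of the associated bundle construction. If one is uneasy about the infinite-dimensionality of $EG$, the whole argument goes through verbatim for finite-dimensional approximations $EG_N\to BG_N$ and then passes to a colimit, exactly as is customary in equivariant cohomology.
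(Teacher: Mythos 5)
Your proof is correct, and it refines the same basic idea as the paper's, but it travels a genuinely more complete route. The paper's proof restricts $\pi_G\colon (M)_G\to BG$ to a single fibre: it stacks the Cartesian squares $M\to (M)_G$ over $*\to BG$ and $TM=V(M\to *)\to V(\pi_G)$ on top, so that \Cref{prop:vertical_isomorphism_fiber_square} identifies the restriction of $V(\pi_G)$ to that fibre with $TM$, and then concludes ``by the universal property'' that $V(\pi_G)\cong (TM)_G$. You instead pull back along all of $EG\to BG$, identify $q^*V(\pi)$ with $TM\times EG$ as a $G$-equivariant bundle on $M\times EG$, and descend along the principal $G$-bundle $q$. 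Your extra care is not idle: a vector bundle on $(M)_G$ is not determined by its restriction to one fibre, so the $G$-equivariant trivialization over $M\times EG$ is precisely the datum needed to make the paper's appeal to ``the universal property'' rigorous. The equivariance check you defer really is routine --- $q\circ g=q$ gives $dq\circ dg=dq$, so the isomorphism of \Cref{prop:vertical_isomorphism_fiber_square} intertwines the two lifts --- and your closing remark about finite-dimensional approximations of $EG$ handles the only remaining technical point. In short, your argument is a valid proof, and it can be read as the fully spelled-out version of the paper's terser sketch.
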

\begin{proof}
	Consider the following diagram of Cartesian squares.
	\[\begin{tikzcd}[ampersand replacement=\&]
		{TM=V(\pi)} \&\& {V(\pi_G)} \\
		\\
		M \&\& {M_G} \\
		\\
		{*} \&\& BG
		\arrow[from=1-1, to=1-3]
		\arrow[from=1-1, to=3-1]
		\arrow["\lrcorner"{anchor=center, pos=0.125}, draw=none, from=1-1, to=3-3]
		\arrow[from=1-3, to=3-3]
		\arrow[from=3-1, to=3-3]
		\arrow["\pi"', from=3-1, to=5-1]
		\arrow["\lrcorner"{anchor=center, pos=0.125}, draw=none, from=3-1, to=5-3]
		\arrow["{\pi_G}", from=3-3, to=5-3]
		\arrow[from=5-1, to=5-3]
	\end{tikzcd}\]
	By the universal property, it follows that the vertical subbundle of the projection \((M)_G\to BG\) is isomorphic to the Borel construction of the tangent bundle of \(M\). 
\end{proof}
Combining this result with the description of equivariant cohomology of quasitoric manifolds, we get the following result, which is a version of Masuda's result \cite[Theorem 3.1]{Masuda} for unitary generalized quasitoric manifolds.
\begin{theorem}\label{thm:chern_equiv_quasitoric}
	The total equivariant Chern class of \UGQT-manifold \(X_{\Sigma,\Lambda}\) is given by the following formula.
	\[	c^T(TX_{\Sigma,\Lambda})=\prod_{\rho\in \Sigma(1)}(1+[D_{\rho}]_T).\]
	Here \([D_{\rho}]_T\) is the equivariant cohomology class of the characteristic submanifold associated to the ray \(\rho\) in the fan \(\Sigma\).
\end{theorem}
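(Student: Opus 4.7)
The plan is to deduce \Cref{thm:chern_equiv_quasitoric} directly from \Cref{thm:chern_class_quasitoric} and \Cref{prop:stable_complexity_of_quasitoric_manifold_bundles} by reinterpreting the equivariant Chern class as the ordinary Chern class of the vertical subbundle of the Borel construction. Concretely, I would first set $\pi_T:(X_{\Sigma,\Lambda})_T\to BT$ to be the projection of the Borel construction. Since $(X_{\Sigma,\Lambda})_T$ is the associated $X_{\Sigma,\Lambda}$-bundle for the universal principal $T$-bundle $ET\to BT$, \Cref{cor:universal_generalized_quasitoric_manifold_bundle} identifies it as (a colimit of) fibered \UGQT-manifolds, so \Cref{thm:chern_class_quasitoric} applies and in particular \Cref{thm:chern_equiv_basic} yields
\[
    c(V(\pi_T))=\prod_{\rho\in\Sigma(1)}(1+[\wt{D_\rho}]),
\]
where $[\wt{D_\rho}]\in H^*((X_{\Sigma,\Lambda})_T)=H^*_T(X_{\Sigma,\Lambda})$ is the class of the characteristic submanifold inside the total space of the Borel construction.

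Next I would invoke \Cref{lem:Borel_of_Tangent} to identify $V(\pi_T)$ with the Borel construction $(TX_{\Sigma,\Lambda})_T$. By the very definition of equivariant characteristic classes, the ordinary Chern class of $(TX_{\Sigma,\Lambda})_T$ viewed in $H^*_T(X_{\Sigma,\Lambda})$ is precisely the total equivariant Chern class $c^T(TX_{\Sigma,\Lambda})$. Combining this with the previous display gives
\[
    c^T(TX_{\Sigma,\Lambda})=c((TX_{\Sigma,\Lambda})_T)=c(V(\pi_T))=\prod_{\rho\in\Sigma(1)}(1+[\wt{D_\rho}]),
\]
so the only remaining task is to verify that $[\wt{D_\rho}]=[D_\rho]_T$ under the canonical isomorphism $H^*((X_{\Sigma,\Lambda})_T)\cong H^*_T(X_{\Sigma,\Lambda})$.

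This last identification is the step where I expect the main conceptual (rather than technical) obstacle to lie, although it is essentially a bookkeeping statement. The characteristic submanifold $D_\rho\subset X_{\Sigma,\Lambda}$ is $T$-invariant, and its image under the inclusion into the Borel construction is precisely the characteristic submanifold $\wt{D_\rho}$ of the fibered \UGQT-manifold $(X_{\Sigma,\Lambda})_T$ coming from the rays of $\Sigma$ (as opposed to the rays of the base fan for $BT$ in the twisted-fan description of \Cref{thm:KM_twisted_for_generalized_quasitoric_manifolds}). Consequently, $(D_\rho)_T=\wt{D_\rho}$ as subspaces of $(X_{\Sigma,\Lambda})_T$, and by the definition of equivariant Poincaré duality their cohomology classes agree. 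Once this identification is checked, the theorem follows. I would emphasize that, in contrast to Masuda's original argument, no torsion quotient or localization step is required, because the fibered-quasitoric perspective produces the desired product formula directly in integral equivariant cohomology.
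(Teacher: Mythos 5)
Your proof is correct and follows essentially the same route as the paper: identify the Borel construction of the tangent bundle with the vertical subbundle of \((X_{\Sigma,\Lambda})_T\to BT\) via \Cref{lem:Borel_of_Tangent}, then apply \Cref{thm:chern_equiv_basic}. The only difference is that you spell out the bookkeeping identification \([\wt{D_\rho}]=[D_\rho]_T\) and the definition of equivariant Chern classes, which the paper leaves implicit.
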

\begin{proof}
	By \Cref{lem:Borel_of_Tangent} the Borel construction of the tangent bundle of \(X_{\Sigma,\Lambda}\) is isomorphic to the vertical subbundle of the projection \((X_{\Sigma,\Lambda})_T\to BT\). Thus, we can apply \Cref{thm:chern_equiv_basic} to obtain the result.
\end{proof}
Note that the above result holds on the nose without the need to mod out the \(H_T^*(BT)\)-torsion. This still, however, follows from Masuda's result since the equivariant cohomology of a \UGQT-manifold is \(H^*(BT)\)-torsion-free.
\subsection{Total Chern class of a toroidal horospherical variety}\label{subsec:toroidal_horospherical_varieties}
In a broader context, the results of this paper provide a way to calculate the Chern classes of \emph{toroidal horospherical varieties} in terms of the Chern classes of the base space and the Chern classes of the toric fibre. We describe this briefly below. For a general introduction to horospherical varieties, we refer the reader to the book \cite{Timashev} for a comprehensive introduction to the subject. We also refer the reader to \cite{Terpereau} for a shorter introduction.
\begin{definition}[{\cite[Definition 7.1]{Timashev}}]
Let $G$ be a complex Lie group. A closed subgroup $H \subset G$ is called a \emph{horospherical subgroup} if $H$ contains a maximal unipotent subgroup \(U\) of $G$. A $ G$-variety $X$ is called a \emph{horospherical variety} if the stabilizer of any point in $X$ is a horospherical subgroup. Equivalently, $X$ can be written as $X = GX^U$.
\end{definition}
Denote by \(P=N_G(H)\) the normalizer of a horospherical subgroup \(H\) in a complex Lie group \(G\). 
\begin{lemma}[{\cite[Satz 2.1]{Knop}, \cite[Lemma 7.4]{Timashev}}]
	Let \(G\) be a complex algebraic group and \(H\) a horospherical subgroup of \(G\). Then, the normalizer \(P\) of \(H\) in \(G\) is a parabolic subgroup of \(G\). 
\end{lemma}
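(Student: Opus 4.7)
The plan is to show that $P := N_G(H)$ is parabolic by exhibiting a Borel subgroup $B$ of $G$ contained in $P$, since any closed subgroup of a connected reductive group containing a Borel is automatically parabolic. By the horospherical hypothesis, $H$ contains a maximal unipotent subgroup $U$ of $G$; in the connected reductive setting such a $U$ is the unipotent radical of a unique Borel $B \supset U$, and one has the classical identity $N_G(U) = B$. Fix a maximal torus $T \subset B$, so that $B = T \cdot U$. The containment $U \subseteq H$ trivially gives $U \subseteq P$: for any $u \in U \subseteq H$ and $h \in H$ the conjugate $u h u^{-1}$ lies in $H$. The remaining task is therefore to show $T \subseteq P$.

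The heart of the argument will be to prove that $\mathfrak{h} := \Lie(H)$ is stable under $\operatorname{ad}(\mathfrak{t})$, equivalently that it is a direct sum of $T$-weight spaces. Given an arbitrary $X \in \mathfrak{h}$, I would decompose $X = X_0 + \sum_{\alpha \in \Phi} X_\alpha$ with respect to the root decomposition $\mathfrak{g} = \mathfrak{t} \oplus \bigoplus_\alpha \mathfrak{g}_\alpha$. Since $\mathfrak{u} = \bigoplus_{\alpha \in \Phi^+} \mathfrak{g}_\alpha \subseteq \mathfrak{h}$, every positive-root component $X_\alpha$ already lies in $\mathfrak{h}$, and after subtraction one reduces to $X' = X_0 + \sum_{\beta \in \Phi^-} X_\beta \in \mathfrak{h}$. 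Iteratively bracketing $X'$ against root vectors $E_\gamma \in \mathfrak{g}_\gamma \subset \mathfrak{u}$ for carefully chosen $\gamma \in \Phi^+$ yields further elements of $\mathfrak{h}$; by a descending induction on the height support of $X'$ (using that $\gamma + \beta$ is less negative than $\beta$) one peels off the individual $X_\beta$ and ultimately $X_0$ as separate elements of $\mathfrak{h}$. This gives the weight-space decomposition $\mathfrak{h} = (\mathfrak{h} \cap \mathfrak{t}) \oplus \bigoplus_\alpha (\mathfrak{h} \cap \mathfrak{g}_\alpha)$, so that $T$ normalizes the identity component $H^0$. Since $T$ is connected and $H/H^0$ is finite, the induced $T$-action on $H/H^0$ by conjugation is trivial, so $T$ normalizes $H$ itself.

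Combining with $U \subseteq P$ yields $B = T \cdot U \subseteq P$, whence $P$ is parabolic. The main obstacle I anticipate is the weight-extraction step in the middle paragraph: one has to arrange the bracketing sequence so that the induction on root heights really does isolate each component, and in full detail this depends on the nonvanishing of certain structure constants $[E_\gamma, E_\beta]$ at each stage. A slicker but heavier alternative would be to invoke the structural classification of horospherical subgroups as preimages in parabolic subgroups $P_0$ of normal subgroups of their Levi quotients containing the commutator subgroup, from which $N_G(H) = P_0$ is essentially formal; the direct Lie-algebra route sketched above has the advantage of not depending on that classification.
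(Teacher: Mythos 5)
First, note that the paper does not actually prove this lemma: it is quoted with references to Knop (Satz 2.1) and Timashev (Lemma 7.4), so your argument has to stand on its own. Your global outline is the standard one and is sound as a reduction: \(H\subseteq N_G(H)\) gives \(U\subseteq P\); a (closed) subgroup of a connected reductive group containing a Borel is parabolic; and since \(B=N_G(U)\) is the unique Borel with unipotent radical \(U\), proving \(T\subseteq P\) is indeed equivalent to the statement. The problems are in the execution of the two remaining steps. The central claim --- that a subalgebra \(\mathfrak{h}\supseteq\mathfrak{u}\) is a sum of \(T\)-weight spaces --- is precisely the content of the cited results, and your ``peeling'' induction does not establish it as described. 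Bracketing \(X'=X_0+\sum_{\beta\in\Phi^-}X_\beta\in\mathfrak{h}\) with \(E_\gamma\in\mathfrak{u}\) produces, modulo \(\mathfrak{u}\subseteq\mathfrak{h}\), the single element \(\sum_{\beta}[E_\gamma,X_\beta]\) of \(\mathfrak{h}\); when \(\gamma=-\beta\) this contributes a Cartan term \(c\,h_\gamma\), so what you obtain are new elements of \(\mathfrak{h}\) (for instance elements of \(\mathfrak{t}\cap\mathfrak{h}\)), not the individual components \(X_\beta\), and no rule for the ``carefully chosen'' \(\gamma\) is given that actually isolates them. You yourself concede that the induction hinges on unverified nonvanishing of structure constants; as written this is a plan for a proof, not a proof, and it is exactly the hard point that Knop's and Timashev's arguments (via the structure of subgroups containing \(U\)) are designed to handle.

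The second gap is the passage from \(T\)-stability of \(\mathfrak{h}\) to \(T\subseteq N_G(H)\). Stability of \(\mathfrak{h}\) only gives \(T\subseteq N_G(H^0)\), and there is no ``induced \(T\)-action on \(H/H^0\) by conjugation'' unless \(T\) already normalizes \(H\) --- which is what you are trying to prove; conjugation by \(t\in T\) a priori sends \(H\) to some other subgroup between \(H^0\) and \(N_G(H^0)\). Connectedness of \(T\) does not by itself force such a family of finite subgroups to be constant: a connected group can move a finite subgroup through a positive-dimensional family of conjugates (already order-two subgroups of \(\mathbb{G}_m\ltimes\mathbb{G}_a\) do this), so the component group needs a genuine argument, and in the literature it comes out of the finer structure theory (\(H\trianglelefteq P\) with \(P/H\) diagonalizable), i.e.\ from the classification you hoped to bypass --- your ``heavier alternative'' is in effect what the cited sources do. A minor further point: you work with \(G\) connected reductive, while the lemma is stated for a complex algebraic (Lie) group, so the reduction to that case should at least be recorded.
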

The following result is an immediate consequence of the above lemma.
\begin{corollary}
	The quotient \(G/N_G(P)\) is a projective variety. 
\end{corollary}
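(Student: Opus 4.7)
The plan is to reduce this to the standard fact that a generalized flag variety is projective. First, I would invoke the preceding lemma (\cite[Satz 2.1]{Knop}, \cite[Lemma 7.4]{Timashev}) to conclude that \(P=N_G(H)\) is a parabolic subgroup of \(G\). Once we know \(P\) is parabolic, the quotient \(G/N_G(P)\) should collapse to an ordinary partial flag variety, because parabolic subgroups in a connected algebraic group are self-normalizing, i.e.\ \(N_G(P)=P\). This is a classical fact: if \(gPg^{-1}=P\), then both \(P\) and \(gPg^{-1}\) are Borel-containing parabolics conjugate by an element that normalizes \(P\), and the self-normalizing property follows from the description of parabolics as stabilizers of points in \(G/P\) together with the fact that \(P\) itself is the full stabilizer of the identity coset.

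Having identified \(G/N_G(P)\) with \(G/P\), the remainder of the proof is the standard construction of generalized flag varieties as projective varieties. Concretely, one picks a dominant regular weight \(\lambda\) associated to \(P\), considers the corresponding irreducible \(G\)-representation \(V_\lambda\) with highest weight line \(\ell\), and observes that the stabilizer of \([\ell]\in\field{P}(V_\lambda)\) is precisely \(P\); hence the orbit map \(G\to \field{P}(V_\lambda)\), \(g\mapsto g\cd[\ell]\), identifies \(G/P\) with a closed orbit in projective space. Closed orbits in projective space are projective varieties, so we are done.

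The only step that deserves any care is the equality \(N_G(P)=P\) at the level of generality in which we are working (complex algebraic groups, not explicitly assumed connected). If \(G\) is not connected, one may need to pass to the identity component \(G^\circ\) and argue separately on components, but since the horospherical setup in Timashev's treatment takes \(G\) to be a connected complex algebraic group, this is not really an obstacle. I expect the entire corollary to be a one- or two-line invocation of standard structure theory, with the main content already packaged in the lemma preceding it.
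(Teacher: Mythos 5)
Your proposal is correct and takes essentially the same route as the paper, which treats the corollary as immediate from the preceding lemma: \(P\) is parabolic, hence self-normalizing with \(N_G(P)=P\), and \(G/P\) is projective by the standard structure theory you spell out. One minor imprecision in your sketch: for \(P\neq B\) the weight \(\lambda\) should be dominant and \emph{regular relative to \(P\)} (i.e.\ pairing to zero exactly with the coroots of the Levi of \(P\)), not regular in the absolute sense, since a regular dominant weight has highest-weight-line stabilizer \(B\) rather than \(P\).
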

\begin{construction}
	Given a horospherical variety \(X\), there is a \(G\)-equivariant canonical map \(\vp:X\to G/P\). Denote by \(F\) the fibre of this map over \([e]\in G/P\).
\end{construction}
\begin{definition}[{\cite[\S 1.2]{Terpereau}}]
	A variety \(X\) is called a \emph{toroidal horospherical variety} if it verifies the following condition.
	\[
		X\cong G\times_{G/P} F.
	\]
	In this case, the fibre \(F\) has a natural structure of a toric variety. As a result, the variety \(X\) has a natural structure of a toric variety bundle over \(G/P\).
\end{definition}
There is also a characterization of toroidal horospherical varieties as varieties having no colours in the sense of Luna--Vust \cite[\S 29]{Timashev}.
We have the following version of \Cref{thm:chern_class_formula} for toroidal horospherical varieties.
\begin{theorem}
	Let \(X\) be a toroidal horospherical variety. Then, the total Chern class of the tangent bundle of \(X\) is given by the following formula.
	\[
		c(TX)=\vp^*c(T(G/P))\cd \prod_{\rho\in \Sigma_F(1)}(1+[\wt{D_\rho}]).
	\]
	Here, \(P\) is the normalizer of the horospherical subgroup \(H\) in \(G\), \(\Sigma_F\) is the fan associated to the toric fibre \(F\) of the map \(\vp:X\to G/P\).
\end{theorem}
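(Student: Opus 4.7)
The plan is to recognize a toroidal horospherical variety $X$ as a toric variety bundle over the generalized flag variety $G/P$ in the sense of \Cref{sec:toric_bundles}, and then to invoke \Cref{thm:chern_class_formula} with no further computation.

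First, I would unpack the principal torus bundle structure on $G/H \to G/P$. Since $H$ contains a maximal unipotent subgroup of $G$ and $P = N_G(H)$ is parabolic, a standard structure result for horospherical subgroups (see e.g.\ \cite[\S 7]{Timashev}) identifies the quotient $T := P/H$ as an algebraic torus. Hence $G/H \to G/P$ is a principal $T$-bundle, and the $P$-action on the fibre $F = \vp^{-1}([e])$ factors through $T$. The toroidal hypothesis $X \cong G \times_{G/P} F$ is then exactly the statement that this $T$-action makes $F$ into a toric variety $X_{\Sigma_F}$ and that
\[
X \cong (G/H) \times_{T} X_{\Sigma_F}.
\]
In the language of \Cref{sec:toric_bundles}, this exhibits $\vp:X\to G/P$ as the toric variety bundle associated to the principal $T$-bundle $G/H \to G/P$ and the fan $\Sigma_F$.

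Second, I would apply \Cref{thm:chern_class_formula} directly to $\vp: X \to G/P$, with $B = G/P$, $\Sigma = \Sigma_F$, and $\pi = \vp$. This yields
\[
c(TX) = \vp^* c(T(G/P)) \cd \prod_{\rho \in \Sigma_F(1)} \bigl(1 + [\wt{D_\rho}]\bigr),
\]
which is precisely the claim.

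The main obstacle here is not computational but conceptual: one must verify that the toroidal hypothesis matches the toric-variety-bundle structure required by \Cref{thm:chern_class_formula}. This hinges on the Luna--Vust characterization of toroidal horospherical varieties as those with no colours, which is precisely the condition ensuring that the fibre $F$ is a toric variety for the torus $T = P/H$, rather than a more general spherical $P$-variety with nontrivial $B$-stable divisors dominating $G/P$. Once this identification is granted, the theorem reduces to a direct invocation of the main result of \Cref{sec:chern_classes}.
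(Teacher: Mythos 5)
Your proposal is correct and is exactly the argument the paper intends: the definition of a toroidal horospherical variety already packages $X$ as the toric variety bundle $(G/H)\times_T X_{\Sigma_F}\to G/P$ associated to the principal $T=P/H$-bundle $G/H\to G/P$, and the theorem is then a direct instance of \Cref{thm:chern_class_formula}. Your extra care in checking that the toroidal (no-colours) hypothesis is what guarantees the fibre is genuinely a toric variety for $P/H$ is a useful elaboration of what the paper leaves implicit, but it is not a different route.
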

\subsection{Euler sequences for toric variety bundles}\label{subsec:euler_sequences}
In this section, we discuss an alternative approach to computing the Chern classes of toric variety bundles. The main idea is to use the Euler sequence for toric varieties. The Euler sequence is a short exact sequence of vector bundles on a toric variety \(X_\Sigma\) generalizing the classical Euler sequence for the projective space. Classically, it is used to compute the Chern classes of the tangent bundle of a toric variety, see \cite[\S 13.1]{CoxLittleSchenk}. The main advantage of this approach is that it likely admits a generalization to the case of singular toric varieties. The main disadvantage is that it is more technical than the approach used in this paper. 
\begin{proposition}[{\cite[Example 13.1.1]{CoxLittleSchenk}}]
    Let $X = X_\Sigma$ be a smooth complete toric variety. Its cotangent
    bundle $\Omega_X^1$ fits into the generalized Euler sequence
    \begin{equation*}
        0 \longrightarrow \Omega_X^1 \longrightarrow \bigoplus_{\rho\Sigma(1)} \mathcal{O}_X(-D_\rho) \longrightarrow \mathrm{Pic}(X) \otimes_\mathbb{Z} \mathcal{O}_X \longrightarrow 0
    \end{equation*}
\end{proposition}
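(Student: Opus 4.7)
The plan is to deduce the generalized Euler sequence from the Cox quotient construction and the standard relative cotangent sequence of a principal bundle. Since $X = X_\Sigma$ is smooth and complete, I would realize it as a geometric quotient $X \cong U/G$, where $U = \AA^{\Sigma(1)} \setminus Z(B)$ is the complement of the irrelevant variety in affine $|\Sigma(1)|$-space and $G = \Hom_\Z(\mathrm{Pic}(X), \CT)$ is the Picard torus. Under the smoothness hypothesis, the quotient map $\pi\colon U \to X$ is a principal $G$-bundle in the Zariski topology.

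Next I would write down the relative cotangent sequence of $\pi$:
\[
0 \to \pi^*\Omega_X^1 \to \Omega_U^1 \to \Omega_\pi^1 \to 0,
\]
and identify the outer terms. The sheaf $\Omega_U^1$ is free on the generators $\{dx_\rho\}_{\rho \in \Sigma(1)}$, each of which is $G$-semi-invariant of weight $[D_\rho]$, because $x_\rho$ has that weight and exterior differentiation is $G$-equivariant. The relative cotangent sheaf $\Omega_\pi^1$ is, for any principal $G$-bundle, canonically isomorphic to $\mathrm{Lie}(G)^\vee \otimes \mathcal{O}_U$ carrying the trivial $G$-weight; and $\mathrm{Lie}(G)^\vee$ is canonically $\mathrm{Pic}(X) \otimes_\Z \C$. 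Applying $G$-invariant descent (the equivalence between $G$-equivariant quasi-coherent sheaves on $U$ and quasi-coherent sheaves on $X$), the rank-one sheaf $\mathcal{O}_U \cdot dx_\rho$ of weight $[D_\rho]$ descends to $\mathcal{O}_X(-D_\rho)$, while the trivially-equivariant $\mathrm{Pic}(X) \otimes_\Z \mathcal{O}_U$ descends to $\mathrm{Pic}(X) \otimes_\Z \mathcal{O}_X$. Since $\pi^*$ is exact for a $G$-torsor and descent preserves short exact sequences, the Euler sequence emerges directly.

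The main obstacle is not the exactness — which is automatic from descent — but rather the careful sign bookkeeping in the weight-to-line-bundle correspondence. Specifically, one must verify that the weight $[D_\rho]$ carried by $dx_\rho$ corresponds under descent to $\mathcal{O}_X(-D_\rho)$ rather than $\mathcal{O}_X(+D_\rho)$, and that the connecting map $\mathcal{O}_X(-D_\rho) \to \mathrm{Pic}(X) \otimes_\Z \mathcal{O}_X$ is given on the $\rho$-th summand by $e_\rho \mapsto [D_\rho] \otimes 1$. This amounts to tracking the convention in the Cox construction whereby the coordinate $x_\rho$ corresponds to the canonical section of $\mathcal{O}_X(D_\rho)$, together with the dualization implicit in passing from tangent to cotangent. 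Once these identifications are fixed, comparing with the short exact sequence $0 \to M \to \Z^{\Sigma(1)} \to \mathrm{Pic}(X) \to 0$ of divisor classes provides an independent combinatorial sanity check that the sequence produced by descent agrees with the asserted one.
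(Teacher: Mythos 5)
Your argument is correct, but note that the paper does not prove this proposition at all: it is quoted directly from Cox--Little--Schenck, where the Euler sequence is established by a different route, namely by constructing the map $\bigoplus_{\rho}\mathcal{O}_X(-D_\rho)\to \mathrm{Pic}(X)\otimes_\Z\mathcal{O}_X$ explicitly from the divisor sequence $0\to M\to \Z^{\Sigma(1)}\to \mathrm{Pic}(X)\to 0$ and verifying exactness chart by chart on the affine opens $U_\sigma$ (equivalently, by dualizing the analogous tangent-bundle sequence). Your proof via the Cox quotient is a genuine alternative and is essentially sound: smoothness of $\Sigma$ gives that $G=\Hom_\Z(\mathrm{Pic}(X),\CT)$ acts freely on $U=\AA^{\Sigma(1)}\setminus Z$ and that $U\to X$ is Zariski-locally trivial (over $U_\sigma$ with $\sigma$ a smooth maximal cone, $\Z^{\Sigma(1)\setminus\sigma(1)}\to\mathrm{Pic}(X)$ is an isomorphism, which splits the torsor), completeness guarantees the rays span $N_\R$ so that $M\hookrightarrow\Z^{\Sigma(1)}$ and $G$ really is the Picard torus (and $\mathrm{Pic}(X)$ is free, so $G$ is a torus), and then the relative cotangent sequence of the smooth torsor $\pi$, the identification $\Omega^1_\pi\cong\mathrm{Lie}(G)^\vee\otimes\mathcal{O}_U\cong\mathrm{Pic}(X)\otimes_\Z\mathcal{O}_U$ with trivial weight, and exactness of equivariant descent yield the sequence; your sign check is right, since an invariant section $f\,dx_\rho$ forces $f$ to have weight $-[D_\rho]$, so $\mathcal{O}_U\cdot dx_\rho$ descends to $\mathcal{O}_X(-D_\rho)$. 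What the descent argument buys is conceptual clarity and a uniform treatment of all charts at once (and it adapts naturally to the relative situation used later in the paper, where the same sequence is pulled through the associated-bundle construction); what the textbook's direct construction buys is independence from the local triviality of the Cox torsor and an explicit description of the second map, which your sketch only fixes after the sign and weight bookkeeping you correctly flag as the delicate point.
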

\begin{construction}\label{con:pull-push}
	By definition, a toric variety bundle is constructed as the following quotient of the product of a principal \(T_N\)-bundle and a toric variety \(X_\Sigma\) for the torus \(T_N\).
	\[
		E_\Sigma=E\times_{T_N} X_\Sigma.
	\]
	Thus given a line bundle \(\Lc\to X_\Sigma\) on \(X_\Sigma\), we can pull it back to the product \(E\times X_\Sigma\). This gives rise to an \emph{equivariant line bundle} over \(E\times X_\Sigma\). Consequently, it pushes forward to a line bundle on the quotient \(E_\Sigma\). We denote this line bundle by~\(\Lc_{E_\Sigma}\).
\end{construction}
Intuitively, the line bundle \(\Lc_{E_\Sigma}\) is obtained by considering the line bundle \(\Lc\) as a line bundle on \(E_\Sigma\) by considering a fibre isomorphic to \(X_\Sigma\) through every point of \(E_\Sigma\). 
\begin{proposition}
	Take a fibered toric variety \(E_\Sigma\to B\) with the toric fibre \(X_\Sigma\) corresponding to a fan \(\Sigma\), then we have the following exact sequence of coherent sheaves on \(X\). 
	\[
		0\to \Omega_{E/B}^1\to \bigoplus_{\rho\in \Sigma(1)} \mathcal{O}_E(-D_\rho)\to \mathrm{Pic}(X_\Sigma) \otimes_\mathbb{Z} \mathcal{O}_E\to 0. 
	\]
	Here \(\O_{E_\Sigma}(-D_\rho)\) is the line bundle on \(E_\Sigma\) associated with the line bundle \(\O_{X_\Sigma}(-D_\rho)\) on \(X_\Sigma\) by means of \Cref{con:pull-push}.
\end{proposition}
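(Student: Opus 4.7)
The plan is to obtain the relative Euler sequence by equivariantly pulling back the classical Euler sequence from \(X_\Sigma\) to the product \(E\times X_\Sigma\) and then descending to the quotient \(E_\Sigma=E\times_{T_N}X_\Sigma\), exactly in the spirit of \Cref{con:pull-push}. The classical Euler sequence on \(X_\Sigma\) is naturally \(T_N\)-equivariant (all three sheaves carry canonical \(T_N\)-linearizations, and both maps are equivariant), so its pullback along the second projection \(E\times X_\Sigma\to X_\Sigma\) is an exact sequence of \(T_N\)-equivariant coherent sheaves on \(E\times X_\Sigma\) with the diagonal \(T_N\)-action. Since the diagonal \(T_N\)-action on \(E\times X_\Sigma\) is free (\(T_N\) already acts freely on the first factor), faithfully flat descent along the quotient map \(E\times X_\Sigma\to E_\Sigma\) preserves exactness, producing an exact sequence of coherent sheaves on \(E_\Sigma\).

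The next step is to identify each term of the descended sequence with the claimed sheaf. For the middle term, \Cref{con:pull-push} already assigns to \(\O_{X_\Sigma}(-D_\rho)\) the line bundle \(\O_{E_\Sigma}(-D_\rho)\), and since direct sum commutes with pull-push along a free quotient this yields \(\bigoplus_{\rho\in\Sigma(1)}\O_{E_\Sigma}(-D_\rho)\). For the right-hand term, the \(T_N\)-action on the free abelian group \(\mathrm{Pic}(X_\Sigma)\) is trivial (characters act trivially on isomorphism classes), so its descent is simply \(\mathrm{Pic}(X_\Sigma)\otimes_\Z\O_{E_\Sigma}\). The key identification is on the left-hand term: I claim the descent of the pullback of \(\Omega^1_{X_\Sigma}\) is the relative cotangent sheaf \(\Omega^1_{E/B}\). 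Locally over a trivializing open \(U\subseteq B\), the bundle is \(U\times X_\Sigma\to U\) and the relative cotangent sheaf is precisely the pullback of \(\Omega^1_{X_\Sigma}\) along the second projection; the transition maps between trivializations act by \(T_N\)-translation on the fibre, and \(\Omega^1_{X_\Sigma}\) is equivariantly identified with itself under these translations, so the local identifications glue to the global one.

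The main obstacle is the left-hand identification, i.e.\ showing that descent of the pulled-back cotangent sheaf is the genuine relative cotangent sheaf \(\Omega^1_{E/B}\) rather than merely a sheaf fibrewise isomorphic to it. I would handle this by working in the \'etale or analytic topology on \(B\), where \(\pi\) becomes a product \(U\times X_\Sigma\to U\), and observing that the relative cotangent sheaf satisfies \(\Omega^1_{(U\times X_\Sigma)/U}=\mathrm{pr}_2^*\Omega^1_{X_\Sigma}\); the descent datum for \(\Omega^1_{E/B}\) then agrees tautologically with the descent datum obtained from the \(T_N\)-equivariance of \(\Omega^1_{X_\Sigma}\), since both arise from the same change-of-trivialization cocycle on the principal \(T_N\)-bundle \(E\to B\). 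With this identification in hand, the three descended sheaves coincide with the three sheaves in the statement, and exactness is inherited from the exactness of the Euler sequence on \(X_\Sigma\), finishing the proof.
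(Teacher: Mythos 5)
Your argument is correct, but it is not the route the paper takes: the paper's proof is a one-line sketch asserting that the proof of the absolute Euler sequence in Cox--Little--Schenck goes through verbatim once one replaces K\"ahler differentials by \emph{relative} differentials, i.e.\ it proposes to re-run the construction of the maps in the relative setting. You instead take the absolute Euler sequence on the fibre $X_\Sigma$ as a black box, observe that it is a sequence of $T_N$-equivariant sheaves with equivariant maps, pull it back to $E\times X_\Sigma$, and descend along the free quotient $E\times X_\Sigma\to E_\Sigma$. This is arguably the cleaner option and is the one most consistent with \Cref{con:pull-push} and with the pullback-from-the-universal-bundle philosophy of the rest of the paper: exactness is automatic because descent along a torsor is an exact equivalence, and the only real work is the identification of the three descended terms, of which the left-hand one is the nontrivial case. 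Your treatment of that case is right, and is perhaps most efficiently packaged by noting that the square with vertical maps $\mathrm{pr}_1:E\times X_\Sigma\to E$ and $\pi:E_\Sigma\to B$ is Cartesian, so base change for relative differentials gives a canonical isomorphism $q^*\Omega^1_{E_\Sigma/B}\cong\Omega^1_{(E\times X_\Sigma)/E}=\mathrm{pr}_2^*\Omega^1_{X_\Sigma}$ compatible with the descent data --- this is exactly your local-trivialization cocycle argument in coordinate-free form. Two small points worth tightening: the equivariance of the absolute Euler sequence deserves at least a sentence (the maps $\mathcal{O}_{X_\Sigma}(-D_\rho)\hookrightarrow\mathcal{O}_{X_\Sigma}$ are inclusions of ideal sheaves of invariant divisors, hence equivariant for the canonical linearizations), and your justification that the right-hand term carries the \emph{trivial} linearization (``characters act trivially on isomorphism classes'') is not quite the relevant statement --- what matters is that the map to $\mathrm{Pic}(X_\Sigma)\otimes_{\mathbb{Z}}\mathcal{O}_{X_\Sigma}$ factors through $\mathbb{Z}^{\Sigma(1)}\otimes_{\mathbb{Z}}\mathcal{O}_{X_\Sigma}$ via those ideal-sheaf inclusions, which forces the trivial linearization on the cokernel. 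What the paper's approach buys, by contrast, is a construction that does not presuppose the smooth absolute statement and is therefore better suited to the singular generalization the author advertises at the end of the section.
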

\begin{proof}[Sketch of the proof]
	The result is identical to the proof of \cite[Theorem 8.6]{CoxLittleSchenk} if one passes to sheaves of relative forms. 
\end{proof}
\begin{corollary}\label{cor:vertical_chern_class_euler}
	Let \(E_\Sigma\to B\) be a toric variety bundle. Then, the total Chern class of the vertical subbundle of the tangent bundle of \(E_\Sigma\) is given by the following formula.
	\[
		c(V(\pi))=\prod_{\rho\in \Sigma(1)}(1+[\wt{D_\rho}]).
	\]
\end{corollary}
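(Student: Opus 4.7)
The plan is to dualize the relative Euler sequence from the preceding proposition and read off $c(V(\pi))$ via Whitney multiplicativity. First, I would observe that in the sequence
\[
    0\to \Omega_{E/B}^1\to \bigoplus_{\rho\in \Sigma(1)} \mathcal{O}_E(-D_\rho)\to \mathrm{Pic}(X_\Sigma) \otimes_\mathbb{Z} \mathcal{O}_E\to 0
\]
all three terms are locally free: the middle term tautologically, as a direct sum of line bundles; the right-hand term because $\mathrm{Pic}(X_\Sigma)$ is a finitely generated free abelian group for a complete smooth toric variety; and the left-hand term by smoothness of $\pi$. Consequently, the sequence dualizes to a short exact sequence of vector bundles on $E_\Sigma$,
\[
    0\to \mathrm{Pic}(X_\Sigma)^\vee \otimes_\mathbb{Z} \mathcal{O}_E\to \bigoplus_{\rho\in \Sigma(1)} \mathcal{O}_E(D_\rho)\to V(\pi)\to 0,
\]
where I use the standard identification $V(\pi)\cong (\Omega^1_{E/B})^\vee$.

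Next, I would apply Whitney multiplicativity of the total Chern class. Since $\mathrm{Pic}(X_\Sigma)^\vee \otimes_\mathbb{Z} \mathcal{O}_E$ is a direct sum of finitely many copies of the trivial line bundle on $E_\Sigma$, its total Chern class is $1$. The Whitney formula then yields
\[
    c(V(\pi))=\prod_{\rho\in \Sigma(1)} c\bigl(\mathcal{O}_E(D_\rho)\bigr)=\prod_{\rho\in \Sigma(1)}\bigl(1+[\wt{D_\rho}]\bigr),
\]
where the last equality uses that $c_1$ of the line bundle $\mathcal{O}_E(D_\rho)$ built via \Cref{con:pull-push} from $\mathcal{O}_{X_\Sigma}(D_\rho)$ is precisely the class $[\wt{D_\rho}]$ appearing in the statement.

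The main obstacle I anticipate is not the algebraic manipulation but rather justifying the local freeness and exactness of the dualized Euler sequence carefully. In the smooth case this reduces fibrewise to the classical dual Euler sequence on a smooth complete toric variety, so the verification is routine; however, as the author flags, the point of this Euler-sequence approach is to lay the groundwork for the singular case, and there one must reinterpret $\Omega^1_{E/B}$ as a coherent (not necessarily locally free) sheaf, replace the dual by the tangent sheaf $\mathcal{T}_{E/B}$, and keep careful track of where exactness can fail. A secondary but smaller point to check is that the construction \Cref{con:pull-push} genuinely intertwines first Chern classes with the tilded divisor classes, which again is straightforward in the smooth case but merits a sentence.
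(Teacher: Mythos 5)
Your proposal is correct and follows essentially the same route as the paper: the paper's proof simply states that the result is immediate from the relative Euler sequence, the identification \(V(\pi)\cong(\Omega^1_{E_\Sigma/B})^\vee\), and functoriality, which is exactly what you carry out explicitly by dualizing the sequence and applying the Whitney formula. Your additional remarks on local freeness and on matching \(c_1(\mathcal{O}_E(D_\rho))\) with \([\wt{D_\rho}]\) are just the details the paper leaves implicit.
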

\begin{proof}
	The result is immediate by functoriality of cohomology and the fact that \(V(\pi)\) is the dual bundle of the cotangent vertical bundle \(\Omega_{E_\Sigma/B}^1\).
\end{proof}
Using the above result, we can compute the Chern classes of the toric variety bundle \(E_\Sigma\to B\) as follows.
\begin{theorem}
	Let \(E_\Sigma\to B\) be a toric variety bundle. Then, the total Chern class of the tangent bundle of \(E_\Sigma\) is given by the following formula.
	\[
		c(TE)=\pi^*c(TB)\cd \prod_{\rho\in \Sigma(1)}(1+[\wt{D_\rho}]).
	\]
\end{theorem}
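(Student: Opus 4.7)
The plan is to derive the formula directly by assembling two ingredients that are now available: the vertical tangent sequence of the smooth fibre bundle $\pi : E_\Sigma \to B$, and the Chern class of the vertical bundle computed via the relative Euler sequence of \Cref{cor:vertical_chern_class_euler}. This parallels the strategy used to prove \Cref{thm:chern_class_formula}, but with the classifying-space pullback argument replaced by the intrinsic Euler-sequence input, which is what makes this alternative route attractive for a potential extension to singular fibres.

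First I will invoke \Cref{prop:vertical_sequence} to obtain the short exact sequence
\[
0 \to V(\pi) \to TE_\Sigma \to \pi^* TB \to 0
\]
of vector bundles on $E_\Sigma$. The Whitney product formula applied to this sequence yields the multiplicative identity
\[
c(TE_\Sigma) \;=\; c(V(\pi)) \cdot c(\pi^* TB),
\]
and naturality of Chern classes under pullback rewrites the second factor as $\pi^* c(TB)$.

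Next I will substitute for $c(V(\pi))$ using \Cref{cor:vertical_chern_class_euler}, which identifies this class as $\prod_{\rho \in \Sigma(1)}(1 + [\wt{D_\rho}])$. Multiplying the two factors produces the claimed formula on the nose, and no further computation is required.

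The hard part here is not inside this theorem, but has already been discharged in establishing the relative Euler sequence and extracting $c(V(\pi))$ from it; once that vertical Chern class is in hand, the present statement is a purely formal consequence of the Whitney sum formula applied to the vertical exact sequence. The only subtlety worth verifying is that the relative cotangent and vertical tangent sheaves $\Omega^1_{E_\Sigma/B}$ and $V(\pi)$ are genuinely locally free (so that Whitney's formula applies without modification), which follows at once from the smoothness of the toric fibre and the local triviality of $\pi$.
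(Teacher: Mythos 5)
Your proposal is correct and follows essentially the same route as the paper's own proof of this statement: apply \Cref{prop:vertical_sequence} to get the sequence $0 \to V(\pi) \to TE_\Sigma \to \pi^*TB \to 0$, use the Whitney formula together with naturality of Chern classes, and substitute $c(V(\pi)) = \prod_{\rho\in\Sigma(1)}(1+[\wt{D_\rho}])$ from \Cref{cor:vertical_chern_class_euler}. Your closing remark about local freeness of $V(\pi)$ and $\Omega^1_{E_\Sigma/B}$ is a reasonable additional check, consistent with the paper's observation that the fibre bundle condition makes $V(\pi)$ a genuine subbundle.
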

\begin{proof}
	By \Cref{prop:vertical_sequence} we have the following exact sequence of vector bundles on the manifold \(E_\Sigma\).
	\[
		0\to V(\pi)\to TE\to \pi^*TB\to 0.
	\]
	As a result, the theorem follows from the Whitney formula and \Cref{cor:vertical_chern_class_euler}.
\end{proof}
It should be possible to extend the above result to the case of singular toric varieties. The main idea is to use the theory developed by MacPherson for the Chern classes of singular varieties (see \cite{MacPherson}) and the Euler sequence as above to compute the Chern classes of the vertical subbundle. We intend to explore this direction in future work.


\begin{thebibliography}{10}
\bibitem[BP2015]{BuPa} Buchstaber, Victor M., and Taras E. Panov. Toric Topology. Mathematical Surveys and Monographs, volume 204. Providence: American mathematical society, 2015. \doi{10.1090/surv/204}.
\bibitem[CLS2011]{CoxLittleSchenk} 
    Cox, David, John Little, and Henry Schenck. Toric Varieties. Graduate Studies in Mathematics, July 7, 2011. \doi{10.1090/gsm/124}.
\bibitem[DK1987]{Danilov_Khovanskii} Danilov, V.~I.; Khovanskii, A.~G. Newton polyhedra and an algorithm for computing Hodge--Deligne numbers. Mathematics of the USSR-Izvestiya 29 (1987), no. 2, 279--298. \doi{10.1070/IM1987v029n02ABEH000970}.
\bibitem[HKM2021]{3_authors} Hofscheier, Johannes, Askold Khovanskii, and Leonid Monin. “Cohomology Rings of Toric Bundles and the Ring of Conditions.” arXiv, April 20, 2021. \arXiv{2006.12043}.
\bibitem[Hu1994]{Husemoller} Husemoller, Dale. Fibre Bundles. Graduate Texts in Mathematics. New York, NY: Springer New York, 1994. \doi{10.1007/978-1-4757-2261-1}.
\bibitem[KK2012]{Kaveh_Khovanskii} K. Kaveh and A.~G. Khovanskii, Algebraic equations and convex bodies, in {\it Perspectives in analysis, geometry, and topology}, 263--282, Progr. Math., 296, Birkh\"auser/Springer, New York, 2012; \doi{10.1007/978-0-8176-8277-4\_12}.
\bibitem[KM2023]{Khovanskii_Monin} Khovanskii, Askold; Monin, Leonid. Fibered toric varieties. Mosc. Math. J. 23 (2023), no. 4, 545--558. \arXiv{2311.01754}.
\bibitem[KML2021]{Khovanskii_Monin_Limonchenko} Khovanskii, Askold, Ivan Limonchenko, and Leonid Monin. “Cohomology Rings of Quasitoric Bundles.” arXiv, December 30, 2021. \arXiv{2112.14970}.

\bibitem[Kn1990]{Knop} Knop, Friedrich. “Weylgruppe und Momentabbildung.” Inventiones mathematicae 99, no. 1 (December 1, 1990): 1–23. \doi{10.1007/BF01234409}.
\bibitem[McP1974]{MacPherson} MacPherson, R. D. “Chern Classes for Singular Algebraic Varieties.” Annals of Mathematics 100, no. 2 (1974): 423–32. \doi{10.2307/1971080}.
\bibitem[Ma1999]{Masuda} Masuda, Mikiya. “Unitary Toric Manifolds, Multi-Fans and Equivariant Index.” Tohoku Mathematical Journal 51, no. 2 (January 1, 1999). \doi{10.2748/tmj/1178224815}.
\bibitem[Te2019]{Terpereau} Terpereau, Ronan, ``An Overview of the Classification of Spherical and Complexity-One Varieties'', Lecture Notes, \web{http://ronan.terpereau.perso.math.cnrs.fr/enseignement/overview_classification_spherical_and_compelxity-one_varieties.pdf}.
\bibitem[Ti2011]{Timashev} Timashev, D.A. Homogeneous Spaces and Equivariant Embeddings. Encyclopaedia of Mathematical Sciences. Berlin, Heidelberg: Springer Berlin Heidelberg, 2011. \doi{10.1007/978-3-642-18399-7}.


\end{thebibliography}
\end{document}